\newcommand{\coloneq}{\mathrel{\vcenter{\baselineskip0.5ex \lineskiplimit0pt
                     \hbox{\scriptsize.}\hbox{\scriptsize.}}}%
                     =}
\newcommand\Ccal{\mathcal C}
\newcommand\Mcal{\mathcal M}
\newcommand\Pcal{\mathcal P}
\newcommand\Nbb{\mathbb N}
\newcommand\Rbb{\mathbb R}
\newcommand\loc{\mathrm{loc}}
\newcommand\NX{{N^1\!X}}
\newcommand\NtX{{\widetilde{N}^1\!X}}
\newcommand\eps{\varepsilon}
\newcommand\Mod{\mathop{\mathrm{Mod}}\nolimits}
\newcommand\lip{\mathop{\mathrm{lip}}\nolimits}
\newcommand{\vphi}{\varphi}
\renewcommand{\theenumi}{(\alph{enumi})}
\def\itoverline#1{\skew{3}{\overline}{#1}}
\theoremstyle{plain}
\newtheorem{thm}{Theorem}[section]
\newtheorem{lem}[thm]{Lemma}
\newtheorem{pro}[thm]{Proposition}
\newtheorem{cor}[thm]{Corollary}
\theoremstyle{definition}
\newtheorem{df}[thm]{Definition}
\newtheorem{exa}[thm]{Example}
\newtheorem{rem}[thm]{Remark}
\numberwithin{equation}{section}
\title[Minimal weak upper gradients]{Minimal weak upper gradients in Newtonian spaces based~on~quasi-Banach function lattices}
\author{Luk\'{a}\v{s} Mal\'{y}}
\date{October 4, 2012}
\subjclass[2010]{Primary 46E35; Secondary 30L99, 46E30.}
\keywords{Newtonian space, upper gradient, weak upper gradient, Banach function lattice, quasi-normed space, metric measure space}
\address{Luk\'{a}\v{s} Mal\'{y}\\Department of Mathematics\\Link\"{o}ping University\\SE-581 83 Link\"{o}ping\\Sweden}
\email{lukas.maly@liu.se}
\begin{document}
\begin{abstract}
Properties of first-order Sobolev-type spaces on abstract metric measure spa\-ces, so-called Newtonian spaces, based on quasi-Banach function lattices are investigated. The set of all weak upper gradients of a Newtonian function is of particular interest. Existence of minimal weak upper gradients in this general setting is proven and corresponding representation formulae are given. Furthermore, the connection between pointwise convergence of a sequence of Newtonian functions and its convergence in norm is studied. 
\end{abstract}
\maketitle{}
%
% ----------------------------------------------------------
% ----------------------------------------------------------
% ---------------- SECTION 1 - INTRODUCTION ----------------
% ----------------------------------------------------------
% ----------------------------------------------------------
%
\section{Introduction}
\label{sec:intro}
Generalizations of first-order Sobolev spaces in abstract metric measure spaces have been intensively studied in the past two decades. Such theories lead to new and interesting results, which can be readily used when studying functions defined on (not necessarily open) subsets of $\Rbb^n$. Shanmugalingam \cite{Sha} pioneered the theory of Newtonian spaces, corresponding to the Sobolev spaces $W^{1,p}$ for $p\in(1, \infty)$. There, the distributional gradients, which heavily rely on the linear structure of $\Rbb^n$, are substituted by the so-called upper gradients. These were originally introduced by Heinonen and Koskela \cite{HeiKos0,HeiKos}. The upper gradients are defined as Borel functions that can be used for certain pointwise estimates of differences of function values. The upper gradient of a given function is hence not determined uniquely. Therefore, the Newtonian norm is defined via a minimization process, namely,
\[
  \|u\|_\NX \coloneq \|u\|_X + \inf_g \|g\|_X\,,
\]
where $X$ is the underlying function space, e.g., $X=L^p$ corresponds to the Sobolev space $W^{1,p}$, and the infimum is taken over all upper gradients $g$ of the function $u$. 

It is only natural to ask whether this infimum is attained for some upper gradient. Since the set of upper gradients corresponding to a given function is not closed in general, one cannot really expect to obtain an affirmative answer. Indeed, the infimum need not~be attained as can be seen, e.g., in Bj\"{o}rn and Bj\"{o}rn \cite[Example~1.31]{BjoBjo} and in Mal\'{y}~\cite[Example 2.6]{Mal}. On the other hand, the same Newtonian theory can be built using weak upper gradients, which were introduced by Koskela and MacManus \cite{KoMM} as a relaxation of upper gradients. The weak upper gradients are more flexible and it might seem feasible that the infimum is attained for some weak upper gradient.

The question of existence of a unique minimal weak upper gradient has been of great interest. Shanmugalingam showed in \cite{Sha2} that a minimal weak upper gradient exists in the $L^p$ setting for $p\in(1, \infty)$. Tuominen \cite{Tuo} extended this result and proved the existence in the setting of reflexive Orlicz spaces. Mocanu \cite{Moc3} followed up the same method to show that minimal weak upper gradients exist in strictly convex reflexive Banach function spaces (see Bennett and Sharpley \cite[Definition~I.1.3]{BenSha} for the definition of Banach function spaces). In these papers, it is shown that the set of weak upper gradients of a given function is convex and closed. Then, the existence of a minimal element of this set is established using the James characteristic of reflexive spaces (see Blatter \cite{Bla}, cf. James \cite{Jam}). Even though reflexivity of the underlying function space is crucial for this method, it is not assumed, nor mentioned in \cite{Moc3}. In the present paper, we also study the properties of the sets of (weak) upper gradients in our setting. Nevertheless, we use a different approach, which does not depend on reflexivity, to find minimal weak upper gradients.

Haj\l{}asz \cite{Haj2} proved the existence of a minimal weak upper gradient in the $L^p$ setting for $p\in [1, \infty)$. He constructed a convergent sequence of weak upper gradients that minimizes a certain energy functional. The limit function was shown to be a weak upper gradient as well. 
Costea and Miranda \cite{CosMir} applied an analogous construction in the setting of the Lorentz $L^{p,q}$ spaces for $p \in (1, \infty)$ and $q \in [1, \infty)$.
We will use a similar method to prove the main theorem of the paper, i.e., that minimal weak upper gradients exist in our very general setting of quasi-Banach function lattices. Our result applies, in particular, to the $N^{1,\infty} \coloneq N^1L^\infty$ spaces, where the question of existence of minimal weak upper gradients was still open. Minimal weak upper gradients are determined uniquely pointwise up to sets of measure zero among all weak upper gradients of finite norm.

Having established the existence, we find various representation formulae for minimal weak upper gradients. Unfortunately, these do not hold in full generality (unlike the rest of the paper) since they rely on Lebesgue's differentiation theorem, which requires additional assumptions on the measure with respect to the metric, e.g., the doubling property of the measure is sufficient. The idea of representation formulae originates in Bj\"{o}rn \cite{Bjo}.

Various historical notes on the problem of minimal weak upper gradients can be found in Bj\"{o}rn and Bj\"{o}rn \cite[Section 2.11]{BjoBjo}.

The structure of the paper is the following. In Section~\ref{sec:prelim}, we give the definition of quasi-Banach function lattices, which are the underlying function spaces for this paper. We also define the Newtonian spaces and the Sobolev capacity. Section~\ref{sec:wug} provides us with an overview of the weak upper gradients and their properties that have been established in Mal\'{y} \cite{Mal} and will be used in the following text. Section~\ref{sec:minwug} is devoted to minimal weak upper gradients. Auxiliary claims as well as the main theorem are proven there. Furthermore, we find a family of representation formulae for the minimal weak upper gradients. In Section~\ref{sec:wugset}, we study the sets of (weak) upper gradients and investigate convergence properties of sequences of Newtonian functions.
%
% ----------------------------------------------------------
% ----------------------------------------------------------
% ---------------- SECTION 2 - PRELIMINARIES ---------------
% ----------------------------------------------------------
% ----------------------------------------------------------
%
\section{Preliminaries}
\label{sec:prelim}
We assume throughout the paper that $\Pcal = (\Pcal, d, \mu)$ is a metric measure space equipped with a metric $d$ and a $\sigma$-finite Borel regular measure $\mu$. In our context, Borel regularity means that all Borel sets in $\Pcal$ are $\mu$-measurable and for each $\mu$-measurable set $A$ there is a Borel set $D\supset A$ such that $\mu(D) = \mu(A)$. The connection between $d$ and $\mu$ is given by the condition that every ball in $\Pcal$ has finite positive measure. Let $\Mcal(\Pcal, \mu)$ denote the set of all extended real-valued $\mu$-measurable functions on $\Pcal$. The set of extended real numbers, i.e., $\Rbb \cup \{\pm \infty\}$, will be denoted by $\overline{\Rbb}$. The symbol $\Nbb$ will denote the set of positive integers, i.e., $\{1,2, \ldots\}$. The open ball centered at $x\in \Pcal$ with radius $r>0$ will be denoted by $B(x,r)$. We define the \emph{integral mean} of a measurable function $u$ over a measurable set $E$ of finite positive measure as
\[
  \fint_E u\,d\mu \coloneq \frac{1}{\mu(E)} \int_E u\,d\mu,
\]
whenever the integral on the right-hand side exists, not necessarily finite though.

A linear space $X = X(\Pcal, \mu)$ of equivalence classes of functions in $\Mcal(\Pcal, \mu)$ is said to be a \emph{quasi-Banach function lattice} over $(\Pcal, \mu)$ equipped with the quasi-norm $\|\cdot\|_X$ if the following axioms hold:
\begin{enumerate}
  \renewcommand{\theenumi}{(P\arabic{enumi})}
	\setcounter{enumi}{-1}
  \item \label{df:qBFL.initial} $\|\cdot\|_X$ determines the set $X$, i.e., $X = \{u\in \Mcal(\Pcal, \mu)\colon \|u\|_X < \infty\}$;
  \item \label{df:qBFL.quasinorm} $\|\cdot\|_X$ is a \emph{quasi-norm}, i.e., 
  \begin{itemize}
    \item $\|u\|_X = 0$ if and only if $u=0$ a.e.,
    \item $\|au\|_X = |a|\,\|u\|_X$ for every $a\in\Rbb$ and $u\in\Mcal(\Pcal, \mu)$,
    \item there is a constant $c\ge 1$, the so-called \emph{modulus of concavity}, such that $\|u+v\|_X \le c(\|u\|_X+\|v\|_X)$ for all $u,v \in \Mcal(\Pcal, \mu)$;
  \end{itemize}
  \item $\|\cdot\|_X$ satisfies the \emph{lattice property}, i.e., if $|u|\le|v|$ a.e., then $\|u\|_X\le\|v\|_X$;
    \label{df:BFL.latticeprop}
  \renewcommand{\theenumi}{(RF)}
  \item \label{df:qBFL.RF} $\|\cdot\|_X$ satisfies the \emph{Riesz--Fischer property}, i.e., if $u_n\ge 0$ a.e. for all $n\in\Nbb$, then $\bigl\|\sum_{n=1}^\infty u_n \bigr\|_X \le \sum_{n=1}^\infty c^n \|u_n\|_X$, where $c\ge 1$ is the modulus of concavity. Note that the function $\sum_{n=1}^\infty u_n$ needs be understood as a pointwise (a.e.) sum.
\end{enumerate}
It follows from \ref{df:qBFL.quasinorm} and \ref{df:BFL.latticeprop} that $X$ contains only functions that are finite a.e. In other words, if $\|u\|_X<\infty$, then $|u|<\infty$ a.e. A quasi-Banach function lattice is normed, and thus called a \emph{Banach function lattice}  if the modulus of concavity is equal to $1$. 

In the further text, we will slightly deviate from this rather usual definition of quasi-Banach function lattices. Namely, we will consider $X$ to be a linear space of functions defined everywhere instead of equivalence classes defined a.e. Then, the functional $\|\cdot\|_X$ is really only a quasi-seminorm.

Throughout the paper, we also assume that the quasi-norm $\|\cdot\|_X$ is \emph{continuous}, i.e., if $\|u_n - u\|_X \to 0$ as $n\to\infty$, then $\|u_n\|_X \to \|u\|_X$. The continuity of $\|\cdot\|_X$ in normed spaces follows from the triangle inequality. On the other hand, if the space $X$ is merely quasi-normed, then there is an equivalent continuous quasi-norm satisfying the lattice property due to the Aoki--Rolewicz theorem, cf. Benyamini and Lindenstrauss~\cite[Proposition H.2]{BenLin}.

The Riesz--Fischer property is equivalent to the completeness of the function lattice $X$, given that the quasi-norm is continuous and the conditions \ref{df:qBFL.initial}~--~\ref{df:BFL.latticeprop} are satisfied, cf. Zaanen \cite[Lemma 101.1]{Zaa} or Halperin and Luxemburg \cite{HalLux}.
%
% ----------------------------------------------------------
%
\begin{exa}
All $L^p(\Pcal, \mu)$ spaces for $p\in[1, \infty]$ are Banach function lattices. On the other hand, if $p\in(0,1)$, then they are only quasi-Banach function lattices.

The Lorentz spaces $L^{p,r}(\Pcal, \mu)$ for $p\in (1, \infty]$ and $r\in [1, \infty]$ are Banach function lattices, where a suitable equivalent norm needs to be chosen for $p<r$. However, $L^{1,r}(\Pcal, \mu)$ for $r\in(1, \infty]$ are only quasi-Banach function lattices.

The variable exponent spaces $L^{p(\cdot)}(\Pcal, \mu)$, where $p:\Pcal\to[1,\infty]$, as well as Orlicz spaces are Banach function lattices.

Spaces of continuous, differentiable, or Sobolev functions fail to comply with the lattice property, and hence are not Banach function lattices.

A more detailed discussion on function spaces covered by this general setting can be found in Mal\'{y} \cite[Example 2.1]{Mal}. For a thorough treatise on partially ordered linear spaces, we refer the reader to Luxemburg and Zaanen \cite{LuxZaa} and Zaanen~\cite{Zaa}.
\end{exa}
%
% ----------------------------------------------------------
%
\begin{df}
A sequence of measurable functions $\{f_n\}_{n=1}^\infty$ is said to \emph{converge in measure} to a measurable function $f$ on a set $M$ if for every $\eps>0$,
\[
  \mu(\{x\in M: |f(x) - f_n(x)| \ge \eps\}) \to 0 \quad \mbox{as $n\to\infty$.}
\]
\end{df}
%
% ----------------------------------------------------------
%
\begin{lem}
\label{lem:conv_X-vs-measure}
Let $\{f_n\}_{n=1}^\infty$ be a sequence which converges in $X$, i.e., there is a function $f\in X$ such that $\|f_n - f\|_X \to 0$ as $n\to\infty$. Then, there is a subsequence $\{f_{n_k}\}_{k=1}^\infty$ such that $f_{n_k} \to f$ in measure on sets of finite measure as $k\to \infty$. 
\end{lem}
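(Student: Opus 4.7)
The plan is to combine the Riesz--Fischer property, which allows extraction of an almost everywhere convergent subsequence, with the classical fact that pointwise a.e.\ convergence implies convergence in measure on sets of finite measure.

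First I would extract an a.e.\ convergent subsequence. Using $\|f_n - f\|_X \to 0$, choose indices $n_1 < n_2 < \cdots$ such that $\|f_{n_k} - f\|_X \le (2c)^{-k}$, where $c \ge 1$ is the modulus of concavity of $\|\cdot\|_X$. Set $g \coloneq \sum_{k=1}^\infty |f_{n_k} - f|$, understood as a pointwise a.e.\ sum. The functions $|f_{n_k} - f|$ are non-negative, so property \ref{df:qBFL.RF} yields
\[
  \|g\|_X \le \sum_{k=1}^\infty c^k \|f_{n_k} - f\|_X \le \sum_{k=1}^\infty 2^{-k} = 1 < \infty.
\]
Since $X$ contains only functions that are finite a.e., $g(x) < \infty$ for $\mu$-a.e.\ $x \in \Pcal$, which forces the tail of the series to tend to zero and hence $|f_{n_k}(x) - f(x)| \to 0$ for $\mu$-a.e.\ $x \in \Pcal$.

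Second, I would convert this a.e.\ convergence into convergence in measure on any set $M \subset \Pcal$ of finite measure. Fix $\eps > 0$ and set $E_k \coloneq \{x \in M : |f_{n_k}(x) - f(x)| \ge \eps\}$. Any $x \in \limsup_{k} E_k = \bigcap_{K} \bigcup_{k \ge K} E_k$ satisfies $|f_{n_k}(x) - f(x)| \ge \eps$ infinitely often, so $f_{n_k}(x) \not\to f(x)$; thus $\mu(\limsup_k E_k) = 0$ by Step~1. Because $\mu(M) < \infty$, continuity from above of $\mu$ gives $\mu\bigl(\bigcup_{k \ge K} E_k\bigr) \to 0$ as $K \to \infty$, and therefore $\mu(E_k) \to 0$, which is precisely convergence in measure.

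The only step requiring real care is the quantitative choice of the subsequence: the factor $c^k$ on the right-hand side of the Riesz--Fischer inequality must be absorbed by shrinking $\|f_{n_k} - f\|_X$ faster than $c^{-k}$, which is the reason for choosing the geometric rate $(2c)^{-k}$. Everything else is routine measure theory and uses no more than properties \ref{df:qBFL.quasinorm}--\ref{df:qBFL.RF}.
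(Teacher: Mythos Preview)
Your argument is correct. Both you and the paper pick the same subsequence $\|f_{n_k}-f\|_X\le(2c)^{-k}$ and both ultimately show that the set $\limsup_k E_{n_k}$ has measure zero, but the route to that fact differs. You apply the Riesz--Fischer property directly to $\sum_k|f_{n_k}-f|$ to obtain an a.e.\ dominating function in $X$, deduce pointwise a.e.\ convergence of the subsequence, and then invoke the elementary implication ``a.e.\ $\Rightarrow$ in measure on finite-measure sets.'' The paper instead bounds $\chi_{E_{n_k}}\le|f_{n_k}-f|/\eps$, applies Riesz--Fischer to $\sum_k\chi_{E_{n_k}}$, and uses the lattice property to force $\|\chi_{\limsup_k E_{n_k}}\|_X=0$. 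Your approach is slightly more economical: it yields the a.e.\ convergent subsequence of Lemma~\ref{lem:conv_X-vs-pointwise} as a byproduct, whereas the paper proves Lemma~\ref{lem:conv_X-vs-measure} first and then derives a.e.\ convergence from it via a further subsequence. The paper's route, on the other hand, keeps the entire argument inside the $X$-norm calculus and never appeals to the auxiliary fact that a.e.\ convergence implies convergence in measure.
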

\begin{proof}
Let $\eps > 0$ and let $M \subset \Pcal$ be a measurable set with $\mu(M)<\infty$. Then define $E_n = \{x\in M: |f(x) - f_n(x)| \ge \eps\}$. We can estimate $\chi_{E_n}(x) \le |f(x) - f_n(x)|/\eps$ for every $x\in \Pcal$. Hence, $\|\chi_{E_n}\|_X \le \|(f-f_n)/\eps\|_X$, and consequently $\|\chi_{E_n}\|_X \to 0$ as $n\to \infty$. We want to prove that $\mu(E_{n_k}) \to 0$ as $k\to\infty$ for some subsequence, which is chosen independently of $\eps$ and $M$.

Let us choose a subsequence $\{f_{n_k}\}_{k=1}^\infty$ such that $\| f-f_{n_k} \|_X < (2c)^{-k}$, where $c\ge1$ is the modulus of concavity of $X$. Consequently, we have $\|\chi_{E_{n_k}}\| \le (2c)^{-k}/\eps $. Let $F_j = \bigcup_{k=j}^\infty E_{n_k}$. Then, we have $\mu(F_j) \le \mu(M) < \infty$ for all $j\in \Nbb$. The sets $F_j$ form a decreasing sequence. Letting $F = \bigcap_{j=1}^\infty F_j$, we obtain $\mu(F) = \lim_{j\to\infty} \mu(F_j)$. The Riesz--Fischer property gives that
\[
  \| \chi_{F_j} \|_X \le \biggl\| \sum_{k=j}^\infty \chi_{E_{n_k}} \biggr\|_X \le \sum_{k=j}^\infty c^{k+1-j} \|\chi_{E_{n_k}}\|_X \le \sum_{k=j}^\infty c^{k+1-j} \frac{1}{(2c)^k \eps} = \frac{1}{ (2c)^{j-1}\eps}
\]
for every $j\in\Nbb$. On the other hand, the functions $\chi_{F_j}$ decrease to $\chi_F$ as $j\to\infty$. The lattice property yields $\|\chi_F\|_X \le \|\chi_{F_j}\|_X \to 0$ as $j\to \infty$. Therefore, $\chi_F = 0$ a.e., whence $\mu(F) = 0$. Now, $\mu(E_{n_k}) \le \mu(F_k) \to \mu(F) = 0$ as $k\to 0$. Thus, the subsequence $\{f_{n_k}\}_{k=1}^\infty$ satisfies $\mu(\{x\in M: |f(x) - f_{n_k}(x)| \ge \eps\}) \to 0$ as $k\to\infty$ for every $\eps > 0$ and every $M\subset \Pcal$ of finite measure, i.e., it converges to $f$ in measure on sets of finite measure.
\end{proof}
%
% ----------------------------------------------------------
%
\begin{lem}
\label{lem:conv_X-vs-pointwise}
Let $\{f_n\}_{n=1}^\infty$ be a sequence of measurable functions which are finite a.e. Suppose $f_n \to f$ in measure on sets of finite measure as $n\to \infty$. Then, there is a subsequence $\{f_{n_k}\}_{k=1}^\infty$ such that $f_{n_k} \to f$ a.e. in $\Pcal$. In particular, the conclusion holds if $f_n \to f$ in $X$ as $n\to\infty$.
\end{lem}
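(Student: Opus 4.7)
The plan is to combine the $\sigma$-finiteness of $\mu$ with a single Borel--Cantelli argument, mimicking the classical proof that convergence in measure on a finite-measure set admits an a.e.\ convergent subsequence. First, I would use $\sigma$-finiteness to fix a nested exhaustion $P_1 \subseteq P_2 \subseteq \cdots$ of $\Pcal$ with $\mu(P_m) < \infty$ and $\bigcup_m P_m = \Pcal$. Since $f_n \to f$ in measure on each $P_k$ (a set of finite measure), one can select indices $n_1 < n_2 < \cdots$ inductively so that
\[
  \mu\bigl(\{x \in P_k : |f(x) - f_{n_k}(x)| \ge 1/k\}\bigr) < 2^{-k}
\]
for every $k \in \Nbb$. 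Denote these sets by $A_k$.

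Because $\sum_k \mu(A_k) < \infty$, the Borel--Cantelli lemma yields $\mu(\limsup_k A_k) = 0$. For any $x$ outside this null set, there exists $m$ with $x \in P_m$; by nesting, $x \in P_k$ for all $k \ge m$, and only finitely many indices $k$ can satisfy $x \in A_k$. Consequently $|f(x) - f_{n_k}(x)| < 1/k$ for all sufficiently large $k$, so $f_{n_k}(x) \to f(x)$. This proves the first assertion.

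The final sentence of the lemma then follows by chaining this result with Lemma~\ref{lem:conv_X-vs-measure}: convergence in $X$ produces a subsequence converging in measure on sets of finite measure, to which the previous argument is applied to extract a further a.e.\ convergent sub-subsequence. The only real technicality is making the extraction work uniformly across the whole exhausting sequence at once. Coupling the pointwise tolerance $1/k$ and the measure bound $2^{-k}$ with the index of $P_k$ is what replaces a more cumbersome Cantor-style diagonal argument, and getting this coordination right is the main, albeit minor, obstacle.
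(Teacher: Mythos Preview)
Your argument is correct, but it proceeds differently from the paper. The paper first cites the classical fact that convergence in measure on a finite-measure space yields an a.e.\ convergent subsequence (Halmos), and then runs a Cantor diagonalization over a countable cover $\Pcal = \bigcup_n \Pcal_n$: extract a subsequence that works on $\Pcal_1$, refine it to work on $\Pcal_2$, and so on, finally taking the diagonal. You instead fold the finite-measure step and the globalization into one stroke: by choosing a \emph{nested} exhaustion and tying the tolerance $1/k$, the measure bound $2^{-k}$, and the exhaustion level $P_k$ to the same index, a single Borel--Cantelli application suffices. Your route is slightly slicker and even yields an explicit rate $|f(x)-f_{n_k}(x)|<1/k$ eventually; the paper's route is more modular, reusing a black-box result rather than reproving it. Both handle the final sentence the same way, via Lemma~\ref{lem:conv_X-vs-measure}.
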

\begin{proof}
If $\mu(\Pcal)<\infty$, then the claim follows from a classical result of measure theory, see Halmos \cite[Section 22]{Hal}. Therefore, suppose $\mu(\Pcal)=\infty$. Since $\mu$ is $\sigma$-finite, we have $\Pcal = \bigcup_{n=1}^\infty \Pcal_n$, where $\mu(\Pcal_n)<\infty$ for every $n\in\Nbb$. Existence of the wanted subsequence can be established by a diagonalization argument. First we choose a subsequence $\{f_{1,k}\}_{k=1}^\infty$ of $\{f_n\}_{n=1}^\infty$ such that $f_{1,k} \to f$ a.e. on $\Pcal_1$ as $k\to \infty$. We continue inductively, i.e., having a sequence $\{f_{j,k}\}_{k=1}^\infty$ for some $j\in \Nbb$ we choose a subsequence $\{f_{j+1,k}\}_{k=1}^\infty$ such that $f_{j+1, k} \to f$ a.e. on $\Pcal_{j+1}$ as $k\to \infty$. Finally, we let $f_{n_k} = f_{k,k}$ for all $k\in\Nbb$. Hence $f_{n_k} \to f$ a.e. on $\Pcal_j$ for every $j\in \Nbb$, and therefore a.e. on $\Pcal$, as $k\to\infty$.

Suppose now that $f_n\to f$ in $X$ as $n\to \infty$. Then, there is a subsequence $\{f_{n_j}\}_{j=1}^\infty$ such that $f_{n_j} \to f$ in measure on sets of finite measure as $j\to \infty$ by Lemma \ref{lem:conv_X-vs-measure}. Now, we may choose a subsequence converging a.e. in $\Pcal$ as in the previous part of the proof.
\end{proof}
By a \emph{curve} in $\Pcal$ we will mean a rectifiable non-constant continuous mapping from a
compact interval. Thus, a curve can be (and we will always assume that all curves are) parameterized by arc length $ds$, see e.g. 
Heinonen \cite[Section~7.1]{Hei}. Note that every curve is Lipschitz continuous with respect to its arc length parametrization.

Now, we shall introduce the upper gradients, which are used as a substitute for the modulus of the usual weak gradient in the definition of our Sobolev-type norm. The upper gradients, under the name very weak gradients, were first studied by Heinonen and Koskela in \cite{HeiKos0,HeiKos}.
%
% ---------------------------------------------------------
%
\begin{df}
\label{df:ug}
  Let $u: \Pcal \to \overline{\Rbb}$. A Borel function $g: \Pcal \to [0, \infty]$ is called an \emph{upper gradient} of $u$ if
\begin{equation}
 \label{eq:ug_def}
 |u(\gamma(0)) - u(\gamma(l_\gamma))| \le \int_\gamma g\,ds
\end{equation}
for all curves $\gamma: [0, l_\gamma]\to\Pcal$. To make the notation easier, we are using the convention that $|(\pm\infty)-(\pm\infty)|=\infty$.
\end{df}
%
% ----------------------------------------------------------
%
\begin{df}
\label{df:DX}
A measurable function belongs to the \emph{Dirichlet space} $DX$ if it has an upper gradient in $X$.
\end{df}
%
% ---------------------------------------------------------
%
\begin{df}
The \emph{Newtonian space} based on $X$ is the space $\NX = X \cap DX$ endowed with the quasi-seminorm
\begin{equation}
\label{eq:NX_norm}
  \|u\|_{\NX} = \| u \|_X + \inf_g \|g\|_X\,,
\end{equation}
where the infimum is taken over all upper gradients $g$ of $u$.
Note that the functional $\| \cdot \|_{\NX}$ can be defined by \eqref{eq:NX_norm} for every measurable function $u \notin \NX$ as well, in which case $\| u \|_{\NX} = \infty$.
Let us point out that we assume that functions are defined everywhere, and not just up to equivalence classes a.e.
\end{df}
\begin{rem}
We also define the space of natural equivalence classes $\NtX = \NX / \mathord\sim$\,, where the equivalence relation $u\sim v$ is determined by $\|u-v\|_\NX = 0$. Then, $\NtX$ is a complete (quasi)normed linear space (see Mal\'{y} \cite[Theorem 7.1]{Mal}).
\end{rem}
Note that we follow the notation of Bj\"{o}rn and Bj\"{o}rn \cite{BjoBjo} so that the symbol $\NX$ denotes the space of functions defined everywhere while $\NtX$ denotes the space of equivalence classes. Some authors, e.g., Shanmugalingam \cite{Sha,Sha2}, Tuominen \cite{Tuo},  and Mocanu~\cite{Moc3}, use the corresponding symbols the other way around.
%
% ---------------------------------------------------------
%
\begin{df}
\label{df:capacity}
The \emph{(Sobolev) $X$-capacity} of a set $E\subset \Pcal$ is defined as
\[
  C_X(E) = \inf\{ \|u\|_{\NX}: u\ge 1 \mbox{ on }E\}.
\]
We say that a property of points in $\Pcal$ holds \emph{$C_X$-quasi-everywhere ($C_X$-q.e.)} if the set of exceptional points has $X$-capacity zero. Despite the dependence on $X$, we will often write simply \emph{capacity} and \emph{q.e.} whenever there is no risk of confusion of the underlying function space.
\end{df}
%
% ---------------------------------------------------------
%
\begin{rem}
The capacity provides a finer distinction of sets of zero measure since $\mu(E)=0$ whenever $C_X(E) = 0$.

Moreover, $\|u\|_\NX = 0$ if and only if $u = 0$ q.e. Thus, the natural equivalence classes in $\NX$ are given by equality up to sets of capacity zero (see \cite[Corollary~6.14]{Mal}).
\end{rem}
%
% ----------------------------------------------------------
% ----------------------------------------------------------
% ------------ SECTION 3 - WEAK UPPER GRADIENTS ------------
% ----------------------------------------------------------
% ----------------------------------------------------------
%
\section{Weak upper gradients}
\label{sec:wug}
This section summarizes fundamental properties of the moduli of curve families and the weak upper gradients that have been established in the setting of Newtonian spaces based on quasi-Banach function lattices in Mal\'{y}~\cite{Mal}.
%
% ---------------------------------------------------------
%
\begin{df}
\label{df:curve_families}
The family of all curves in $\Pcal$ will be denoted by $\Gamma(\Pcal)$. For an arbitrary set $E\subset \Pcal$, we define 
\[
  \Gamma_E = \{\gamma \in \Gamma(\Pcal): \gamma^{-1}(E) \neq \emptyset\}
  \quad\mbox{and}\quad
  \Gamma_E^+ = \{\gamma \in \Gamma(\Pcal): \lambda^1(\gamma^{-1}(E))> 0 \},
\]
where $\lambda^1$ denotes the (outer) 1-dimensional Lebesgue measure. 
\end{df}
\begin{rem}
If the set $\gamma^{-1}(E) \subset \Rbb$ is not $\lambda^1$-measurable, then $\lambda^1(\gamma^{-1}(E))> 0$. Observe that $\Gamma_\Pcal = \Gamma(\Pcal)$.
\end{rem}
%
% ---------------------------------------------------------
%
\begin{df}
\label{df:mod}
Let $\Gamma$ be a family of curves in $\Pcal$. The \emph{$X$-modulus} of $\Gamma$ is defined by
\[
  \Mod_X(\Gamma) := \inf \| \rho\|_X\,,
\]
where the infimum is taken over all non-negative Borel functions $\rho$ satisfying $\int_\gamma \rho\,ds\ge 1$ for all $\gamma\in\Gamma$.

An assertion is said to hold for \emph{$\Mod_X$-almost every} curve (abbreviated \emph{$\Mod_X$-a.e.} curve) if the family of exceptional curves has zero $X$-modulus.
\end{df}
It is important to be able to determine whether a curve family is negligible in terms of $\Mod_X$. Hence, we use the following characterization.
%
% ----------------------------------------------------------
%
\begin{pro}
\label{pro:mod0_equiv}
Let $\Gamma$ be a family of curves in $\Pcal$. Then, $\Mod_X(\Gamma) = 0$ if and only if there is a non-negative Borel function $\rho\in X$ such that $\int_\gamma \rho\,ds = \infty$ for all curves $\gamma\in\Gamma.$
\end{pro}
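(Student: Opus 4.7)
The proof naturally splits into the two implications; the reverse (\emph{if}) direction is elementary while the forward (\emph{only if}) direction requires a Riesz--Fischer style aggregation.

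For the \emph{if} direction, suppose $\rho\in X$ is a non-negative Borel function with $\int_\gamma \rho\,ds = \infty$ for every $\gamma\in\Gamma$. For any $\eps>0$ the scaled function $\eps\rho$ is still Borel and satisfies $\int_\gamma \eps\rho\,ds = \infty \ge 1$ on $\Gamma$, hence is admissible in the definition of $\Mod_X(\Gamma)$. Therefore $\Mod_X(\Gamma) \le \|\eps\rho\|_X = \eps\|\rho\|_X$, and letting $\eps\to 0^+$ gives $\Mod_X(\Gamma)=0$.

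For the \emph{only if} direction, assume $\Mod_X(\Gamma)=0$. Let $c\ge1$ be the modulus of concavity of $X$. For each $k\in\Nbb$ the definition of the modulus supplies a non-negative Borel function $\rho_k$ admissible for $\Gamma$, i.e.\ $\int_\gamma \rho_k\,ds \ge 1$ for all $\gamma\in\Gamma$, with $\|\rho_k\|_X < (2c)^{-k}$. Define
\[
  \rho(x) \coloneq \sum_{k=1}^\infty \rho_k(x) \in [0,\infty],
\]
which is non-negative and Borel as a pointwise sum of non-negative Borel functions. The Riesz--Fischer property \ref{df:qBFL.RF} yields
\[
  \|\rho\|_X \le \sum_{k=1}^\infty c^k \|\rho_k\|_X < \sum_{k=1}^\infty c^k (2c)^{-k} = 1,
\]
so $\rho \in X$. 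Finally, by monotone convergence along each $\gamma\in\Gamma$,
\[
  \int_\gamma \rho\,ds = \sum_{k=1}^\infty \int_\gamma \rho_k\,ds \ge \sum_{k=1}^\infty 1 = \infty,
\]
which completes the proof.

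The only delicate point I expect is getting the quantitative choice of norms right: in the quasi-normed setting the usual triangle-inequality summation is replaced by the Riesz--Fischer bound $\|\sum u_n\|_X \le \sum c^n \|u_n\|_X$, and the factor $c^n$ forces $\|\rho_k\|_X$ to decay strictly faster than $c^{-k}$. The choice $(2c)^{-k}$ is the natural one, producing the geometric series $\sum 2^{-k}$. No use of measurability subtleties about $\rho$ (which could in principle be $+\infty$ on a set of positive measure) is needed, since $\rho\in X$ automatically forces $\rho<\infty$ a.e.; and since the conclusion demands $\int_\gamma\rho\,ds=\infty$ anyway, there is no issue with integrability along curves.
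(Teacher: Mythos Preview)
Your proof is correct and is precisely the standard argument for this characterization. Note that the paper does not actually prove Proposition~\ref{pro:mod0_equiv}: Section~\ref{sec:wug} merely summarizes results established in \cite{Mal}, so there is no in-paper proof to compare against; your argument is the expected one and matches how this is proved in the cited reference.
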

%
% -----------------------------------------------------------
%
\begin{df}
A curve $\gamma'$ is a \emph{subcurve} of a curve $\gamma: [0, l_\gamma] \to \Pcal$ if, after re\-pa\-ra\-me\-tri\-za\-tion and perhaps reversion, $\gamma'$ is equal to $\gamma|_{[a,b]}$ for some $0\le a < b \le l_\gamma$.
\end{df}
The following lemma summarizes the fundamental properties of the $X$-modulus of a family of curves.
%
% ----------------------------------------------------------
%
\begin{lem}
\label{lem:mod_properties}
The modulus satisfies the following properties given that $\Gamma_k$, $k\in\Nbb$, are families of curves in $\Pcal$.
\begin{enumerate}
  \item If $\Gamma_1 \subset \Gamma_2$, then $\Mod_X(\Gamma_1) \le \Mod_X(\Gamma_2)$.
  \item \label{lem:mod_properties:union_0}
    If $\Mod_X(\Gamma_k) = 0$ for every $k\in\Nbb$, then $\Mod_X\bigl(\bigcup_{k=1}^\infty \Gamma_k \bigr) = 0$.  
  \item \label{lem:mod_properties:subcurve_family}
    If for every curve $\gamma_1\in\Gamma_1$ there is a subcurve $\gamma_2\in\Gamma_2$ of $\gamma_1$, then $\Mod_X(\Gamma_1) \le \Mod_X(\Gamma_2)$.
\end{enumerate}
\end{lem}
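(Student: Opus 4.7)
The plan is to handle the three parts separately, in increasing order of difficulty, each time reasoning about admissible test functions $\rho$ in the defining infimum of $\Mod_X$.

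For part (a), the proof is essentially a tautology: any non-negative Borel function $\rho$ satisfying $\int_\gamma \rho\,ds \ge 1$ for all $\gamma \in \Gamma_2$ automatically satisfies the same for all $\gamma \in \Gamma_1 \subset \Gamma_2$. Hence the infimum defining $\Mod_X(\Gamma_1)$ is taken over a superset of admissible functions, yielding the stated inequality.

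Part (c) follows the same strategy. Fix a non-negative Borel $\rho$ admissible for $\Gamma_2$. For $\gamma_1 \in \Gamma_1$, choose a subcurve $\gamma_2 \in \Gamma_2$ of $\gamma_1$. Since $\rho \ge 0$ and $\gamma_2$ is obtained from $\gamma_1$ by restriction (and possibly reversion), both of which preserve arc-length integrals of non-negative Borel functions, we obtain
\[
  \int_{\gamma_1} \rho\,ds \ge \int_{\gamma_2} \rho\,ds \ge 1.
\]
Thus $\rho$ is admissible for $\Gamma_1$ as well, and taking the infimum over all such $\rho$ yields $\Mod_X(\Gamma_1)\le\Mod_X(\Gamma_2)$.

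Part (b) is the main step and is where Proposition \ref{pro:mod0_equiv} together with the Riesz--Fischer property \ref{df:qBFL.RF} does the work. Since $\Mod_X(\Gamma_k)=0$, Proposition \ref{pro:mod0_equiv} supplies a non-negative Borel $\rho_k\in X$ with $\int_\gamma \rho_k\,ds=\infty$ for every $\gamma\in\Gamma_k$. The plan is to rescale: replacing $\rho_k$ by $\rho_k/(2^kc^k(1+\|\rho_k\|_X))$, which again lies in $X$ and still integrates to $\infty$ on $\Gamma_k$ (multiplying by a positive constant is harmless), we may assume $\|\rho_k\|_X \le 2^{-k}c^{-k}$, where $c$ is the modulus of concavity. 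Setting $\rho=\sum_{k=1}^\infty \rho_k$ (a pointwise a.e. sum), the Riesz--Fischer property gives
\[
  \|\rho\|_X \le \sum_{k=1}^\infty c^k \|\rho_k\|_X \le \sum_{k=1}^\infty 2^{-k} = 1,
\]
so $\rho\in X$, and $\rho$ can be taken Borel after modification on a null set (or by noting that a countable sum of non-negative Borel functions is Borel). For any $\gamma \in \bigcup_k \Gamma_k$, pick $k$ with $\gamma \in \Gamma_k$; since all summands are non-negative, $\int_\gamma \rho\,ds \ge \int_\gamma \rho_k\,ds = \infty$. Applying the reverse direction of Proposition \ref{pro:mod0_equiv} concludes $\Mod_X\bigl(\bigcup_k \Gamma_k\bigr)=0$. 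The main obstacle is merely making sure the scaling is consistent with the quasi-norm version of Riesz--Fischer (hence the $c^k$ factor), which is exactly what \ref{df:qBFL.RF} is designed to accommodate.
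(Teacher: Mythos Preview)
The paper does not actually prove this lemma: Section~\ref{sec:wug} is explicitly a summary of results established in Mal\'{y}~\cite{Mal}, and Lemma~\ref{lem:mod_properties} is stated there without proof. So there is no in-paper argument to compare against.

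That said, your proof is correct and is the standard one. Parts~(a) and~(c) are immediate from the definition via inclusion of admissible test functions, exactly as you wrote. For part~(b), using Proposition~\ref{pro:mod0_equiv} in both directions together with the Riesz--Fischer property~\ref{df:qBFL.RF} is precisely the natural route in this quasi-Banach setting, and your rescaling by $(2c)^{-k}(1+\|\rho_k\|_X)^{-1}$ is the right way to make the series converge in $X$. One small remark: your parenthetical about modifying $\rho$ on a null set is unnecessary, since each $\rho_k$ is already Borel and a countable sum of non-negative Borel functions is Borel; you may simply drop the first alternative.
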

The set of upper gradients of a given function lacks many useful properties, which makes the upper gradients difficult to work with. For example, upper gradients are required to be Borel, the set of upper gradients is closed neither under taking pointwise minimum of two upper gradients, nor under convergence in $X$. All these drawbacks can be resolved if we relax the conditions and introduce weak upper gradients inspired by the original idea of Koskela and MacManus \cite{KoMM}.
%
% ---------------------------------------------------------
%
\begin{df}
A non-negative measurable function $g$ on $\Pcal$ is an \emph{$X$-weak upper gradient} of an extended real-valued function $u$ on $\Pcal$ if inequality \eqref{eq:ug_def} holds
for $\Mod_X$-a.e. curve $\gamma: [0, l_\gamma]\to\Pcal$.
\end{df}
%
% ----------------------------------------------------------
%
\begin{df}
A Borel function $g: \Pcal\to [0, \infty]$ is called an \emph{upper gradient of $u$ along a curve $\gamma$} if it satisfies inequality \eqref{eq:ug_def} for every subcurve $\gamma'$ of $\gamma$.
\end{df}
%
% ----------------------------------------------------------
%
\begin{lem}
\label{lem:not_ug_along_curves}
If $g$ is an $X$-weak upper gradient of $u$ on $\Pcal$ and
\[
  \Gamma = \{\gamma \in \Gamma(\Pcal)\colon \mbox{$g$ is not an upper gradient of $u$ along $\gamma$}\},
\]
then $\Mod_X(\Gamma) = 0$.
\end{lem}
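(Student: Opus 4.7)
The strategy is to reduce the statement to the subcurve property of the modulus, namely Lemma~\ref{lem:mod_properties}\,(c). The heuristic is simple: being a weak upper gradient controls the endpoint inequality on a.e.\ curve, while being an upper gradient along $\gamma$ asks for the endpoint inequality on every subcurve of $\gamma$. Thus a failure of the latter always manifests itself as some concrete subcurve violating the basic inequality, and such offending subcurves are themselves exceptional curves in the weak-upper-gradient sense.

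Concretely, I would start by letting
\[
  \Gamma_0 = \Bigl\{\gamma \in \Gamma(\Pcal) : |u(\gamma(0)) - u(\gamma(l_\gamma))| > \int_\gamma g\,ds \Bigr\}.
\]
Since $g$ is an $X$-weak upper gradient of $u$, the defining condition is precisely that $\Mod_X(\Gamma_0) = 0$. Next, take any $\gamma \in \Gamma$, i.e., any curve along which $g$ is not an upper gradient of $u$. By Definition~\ref{df:ug} (applied to a subcurve), this means there exist $0 \le a < b \le l_\gamma$ such that, writing $\gamma' \coloneq \gamma|_{[a,b]}$ reparameterized by arc length, one has $|u(\gamma'(0)) - u(\gamma'(l_{\gamma'}))| > \int_{\gamma'} g\,ds$. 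In other words, $\gamma'$ is itself a (rectifiable, non-constant, arc-length-parameterized) curve and $\gamma' \in \Gamma_0$.

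Hence every curve in $\Gamma$ admits a subcurve belonging to $\Gamma_0$. Applying Lemma~\ref{lem:mod_properties}\,(c) with $\Gamma_1 = \Gamma$ and $\Gamma_2 = \Gamma_0$ yields
\[
  \Mod_X(\Gamma) \le \Mod_X(\Gamma_0) = 0,
\]
which is the desired conclusion. The only subtle point I expect to have to address carefully is the very first step, namely to ensure that the candidate subcurve $\gamma'$ is nondegenerate (so that it qualifies as a curve and can indeed lie in $\Gamma_0$); this is implicit in the definition of a subcurve, which requires $a<b$, and a nonstrict or degenerate failure of the inequality can be excluded by the convention $|(\pm\infty)-(\pm\infty)|=\infty$ paired with $|u(\gamma'(0))-u(\gamma'(l_{\gamma'}))|>\int_{\gamma'}g\,ds$. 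No further machinery (not even Proposition~\ref{pro:mod0_equiv}) is required.
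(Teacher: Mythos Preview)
Your argument is correct and is exactly the standard proof: the exceptional family $\Gamma_0$ for the weak-upper-gradient inequality has $\Mod_X(\Gamma_0)=0$ by definition, every $\gamma\in\Gamma$ contains a subcurve in $\Gamma_0$, and Lemma~\ref{lem:mod_properties}\,\ref{lem:mod_properties:subcurve_family} closes the loop. Note that the present paper does not actually supply a proof of Lemma~\ref{lem:not_ug_along_curves}; Section~\ref{sec:wug} only quotes results established in~\cite{Mal}, so there is no in-paper argument to compare against. One small technical caveat: the definition of ``upper gradient of $u$ along $\gamma$'' formally requires $g$ to be Borel, whereas an $X$-weak upper gradient is merely measurable, so your set $\Gamma_0$ (and the subcurve integrals) are a priori only meaningful once one has passed to a Borel representative via Lemma~\ref{lem:mod_ae_equivalence}; with that adjustment your proof is complete.
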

The following lemma guarantees that the path integrals in the definition of $X$-weak upper gradient are well defined for $\Mod_X$-a.e. curve. Moreover, it shows that we may modify a function on a set of measure zero without changing the value of the corresponding path integrals over a significant number of curves.
%
% ----------------------------------------------------------
%
\begin{lem}
\label{lem:mod_ae_equivalence}
Let $g_1$ and $g_2$ be non-negative measurable functions such that  $g_1=g_2$ a.e. Then, there exist Borel functions $\tilde{g}_1, \tilde{g}_2$ such that $\tilde{g}_1 = \tilde{g}_2 = g_1 = g_2$ a.e. while $\tilde{g}_1\le \min\{g_1, g_2\} \le \max\{g_1, g_2\} \le \tilde{g}_2$ everywhere. Moreover,
\[
  \int_\gamma g_1\,ds = \int_\gamma g_2\,ds = \int_\gamma \tilde{g}_1\,ds = \int_\gamma \tilde{g}_2\,ds \quad \mbox{for $\Mod_X$-a.e. curve $\gamma$.}
\]
In particular, $\int_\gamma g_1\,ds$ is well-defined for $\Mod_X$-a.e. curve $\gamma$, having a value in $[0, \infty]$.

Furthermore, if $g_1$ is an $X$-weak upper gradient of $u$, then so is $g_2$.
\end{lem}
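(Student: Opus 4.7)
The plan is to find Borel representatives of $g_1$ and $g_2$ that coincide with them off a single Borel $\mu$-null set $D$, and then to use Proposition~\ref{pro:mod0_equiv} to kill the exceptional curve family $\Gamma_D^+$. Since $\mu$ is Borel regular, every $\mu$-measurable function agrees $\mu$-a.e.\ with a Borel function, so I would pick non-negative Borel $h_1, h_2$ with $h_i = g_i$ a.e. The set $\{h_1 \ne h_2\}$ is Borel and, because $g_1 = g_2$ a.e., has measure zero; each of the measurable null sets $\{h_i \ne g_i\}$ can be enlarged to a Borel null set by Borel regularity. Their union is the desired $D$, and off $D$ the four functions $g_1, g_2, h_1, h_2$ all coincide.

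Next I would define $\tilde{g}_1 = h_1 \chi_{\Pcal \setminus D}$ and $\tilde{g}_2 = h_1 \chi_{\Pcal \setminus D} + \infty \cdot \chi_D$, both clearly Borel. On $\Pcal \setminus D$ they equal $h_1 = g_1 = g_2$, giving the a.e.\ equality $\tilde{g}_1 = \tilde{g}_2 = g_1 = g_2$, while on $D$ one has $\tilde{g}_1 = 0 \le \min\{g_1, g_2\}$ by non-negativity and $\tilde{g}_2 = \infty \ge \max\{g_1, g_2\}$, yielding the pointwise sandwich. To discard bad curves, consider $\rho = \infty \cdot \chi_D$: this is a non-negative Borel function and, since $\rho = 0$ a.e., the lattice property forces $\|\rho\|_X \le \|0\|_X = 0$, so $\rho$ is admissible in Proposition~\ref{pro:mod0_equiv}. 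Because $\int_\gamma \rho \, ds = \infty$ for every $\gamma \in \Gamma_D^+$, that proposition yields $\Mod_X(\Gamma_D^+) = 0$.

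For any curve $\gamma \notin \Gamma_D^+$ one has $\lambda^1(\gamma^{-1}(D)) = 0$, so $g_1, g_2, \tilde{g}_1, \tilde{g}_2$ all agree $\lambda^1$-a.e.\ along $\gamma$ with the Borel-measurable function $h_1 \circ \gamma$ (a Borel function composed with a continuous map), which makes the four path integrals well defined in $[0, \infty]$ and equal. The last assertion follows immediately: if $g_1$ is an $X$-weak upper gradient, then \eqref{eq:ug_def} for $g_1$ fails on a family $\Gamma_1$ of zero $X$-modulus; by Lemma~\ref{lem:mod_properties}\,(b) the union $\Gamma_1 \cup \Gamma_D^+$ is still $\Mod_X$-null, and on its complement \eqref{eq:ug_def} holds verbatim with $g_2$ in place of $g_1$. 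I expect the main obstacle to be the measure-theoretic bookkeeping in the first paragraph, namely collapsing three distinct null sets into a single \emph{Borel} null set $D$ (so that $\Gamma_D^+$ is accessible via Proposition~\ref{pro:mod0_equiv}) together with the legitimacy of the test function $\infty \cdot \chi_D$; the sandwich and the equality of integrals are then a direct case analysis on $D$ versus $\Pcal \setminus D$.
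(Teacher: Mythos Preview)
The paper does not actually supply a proof of Lemma~\ref{lem:mod_ae_equivalence}: it appears in Section~\ref{sec:wug}, which is explicitly a summary of results established in~\cite{Mal}, and is stated there without argument. So there is no in-paper proof to compare against.

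That said, your argument is correct and is essentially the standard one. The Borel-regularity step producing a single Borel null set $D$ carrying all the discrepancies is exactly what is needed, and your choices $\tilde g_1=h_1\chi_{\Pcal\setminus D}$ and $\tilde g_2=h_1\chi_{\Pcal\setminus D}+\infty\cdot\chi_D$ give the claimed sandwich. The use of $\rho=\infty\cdot\chi_D$ is legitimate: $\rho$ is Borel, $\rho=0$ a.e., so \ref{df:qBFL.quasinorm} (or the lattice property) gives $\|\rho\|_X=0$, whence $\rho\in X$ and Proposition~\ref{pro:mod0_equiv} applies to yield $\Mod_X(\Gamma_D^+)=0$. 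For $\gamma\notin\Gamma_D^+$ the set $\gamma^{-1}(D)$ is Borel of $\lambda^1$-measure zero, so $g_i\circ\gamma$ agrees with the Borel function $h_1\circ\gamma$ off a $\lambda^1$-null set and is therefore Lebesgue measurable by completeness; this is the point behind the ``well-defined'' clause, and you handle it. The final implication for $X$-weak upper gradients via Lemma~\ref{lem:mod_properties}\,\ref{lem:mod_properties:union_0} is routine.
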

Next, we shall see that we may approximate any $X$-weak upper gradient by an upper gradient to any desired accuracy in the $X$-norm. Therefore, the Newtonian norm~\eqref{eq:NX_norm} can be equivalently defined using weak upper gradients.
%
% ----------------------------------------------------------
%
\begin{lem}
\label{lem:ug-approx-wug}
Let $g$ be an $X$-weak upper gradient of $u$. Then, there exist $\rho_k\in X$ such that $g+\rho_k$ are upper gradients of $u$ for all $k\in\Nbb$ and $\|\rho_k\|_X\to 0$ as $k\to \infty$. In fact, there is $\rho \in X$ such that we may choose $\rho_k = \rho/k$ for every $k\in\Nbb$.
\end{lem}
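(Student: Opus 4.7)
The plan is to first reduce to the case that $g$ is Borel by applying Lemma~\ref{lem:mod_ae_equivalence}: it provides a Borel function $\tilde g$ that equals $g$ almost everywhere (hence shares its $X$-seminorm) and is also an $X$-weak upper gradient of $u$. So we may assume $g$ itself is Borel; any non-Borel discrepancy of size zero a.e.\ can be absorbed into the $\rho_k$ at the end without affecting their norms.

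Next, I would let $\Gamma$ denote the family of curves along which $g$ fails to be an upper gradient of $u$. Lemma~\ref{lem:not_ug_along_curves} then gives $\Mod_X(\Gamma) = 0$, and Proposition~\ref{pro:mod0_equiv} produces a non-negative Borel function $\rho \in X$ with $\int_\gamma \rho\,ds = \infty$ for every $\gamma \in \Gamma$. This single $\rho$ is the candidate for the stronger second assertion of the lemma; I set $\rho_k \coloneq \rho/k$ for each $k \in \Nbb$, so that $\rho_k \in X$ and $\|\rho_k\|_X = \|\rho\|_X / k \to 0$.

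The main step is to verify that $g + \rho_k$ is an upper gradient of $u$ for every $k$. The function is Borel and non-negative by construction. Given any curve $\gamma \colon [0,l_\gamma] \to \Pcal$, if $\gamma \notin \Gamma$ then $g$ itself satisfies \eqref{eq:ug_def} along $\gamma$, so adding the non-negative term $\rho_k$ preserves the inequality. If $\gamma \in \Gamma$, then
\[
  \int_\gamma (g + \rho_k)\,ds \ge \frac{1}{k}\int_\gamma \rho\,ds = \infty,
\]
so the right-hand side of \eqref{eq:ug_def} is infinite and the inequality holds trivially, thanks to the convention $|(\pm\infty)-(\pm\infty)|=\infty$. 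This simultaneously yields both parts of the lemma.

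I do not expect a genuine obstacle here: Lemma~\ref{lem:not_ug_along_curves} already packages the exceptional curve family for a weak upper gradient, while Proposition~\ref{pro:mod0_equiv} dualizes vanishing modulus into an $X$-integrable function blowing up on that family, and their combination is tailor-made for this kind of approximation. The only mild bookkeeping concerns the \emph{Borel versus merely measurable} distinction, which is handled cleanly by Lemma~\ref{lem:mod_ae_equivalence}, and the trivial observation that an infinite right-hand side makes \eqref{eq:ug_def} vacuous.
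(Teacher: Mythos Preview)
Your argument is correct. Note that the paper does not actually prove Lemma~\ref{lem:ug-approx-wug}: Section~\ref{sec:wug} is explicitly a summary of results established in~\cite{Mal}, and the lemma is stated there without proof. The route you outline---reduce to Borel $g$ via Lemma~\ref{lem:mod_ae_equivalence}, apply Lemma~\ref{lem:not_ug_along_curves} to get the exceptional family $\Gamma$ with $\Mod_X(\Gamma)=0$, extract a non-negative Borel $\rho\in X$ with $\int_\gamma \rho\,ds=\infty$ on $\Gamma$ from Proposition~\ref{pro:mod0_equiv}, and set $\rho_k=\rho/k$---is the standard argument for this statement and matches how it is handled in the source.

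One small remark on your bookkeeping sentence about the non-Borel case: if $g$ is merely measurable and you want the stronger form $\rho_k=\rho/k$ literally, the ``absorb into $\rho_k$'' step should be done by adding $\infty\cdot\chi_D$ to $\rho$ for a Borel null set $D$ containing $\{\tilde g\neq g\}$ (such a $D$ exists by Borel regularity of $\mu$); then $g+\rho/k$ is Borel and dominates $\tilde g+\tilde\rho/k$ everywhere, while $\|\rho\|_X$ is unchanged. This makes both assertions of the lemma go through verbatim.
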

All functions that are equal q.e. have the same set of $X$-weak upper gradients.
%
% ----------------------------------------------------------
%
\begin{lem}
\label{lem:equal_qe_same_wug}
If $u=v$ q.e. and $g$ is an $X$-weak upper gradient of $u$, then $g$ is also an $X$-weak upper gradient of $v$.
\end{lem}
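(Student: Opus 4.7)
The plan is to identify the family of curves along which the upper gradient inequality for $v$ could fail and show it has zero $X$-modulus. Set $E = \{x \in \Pcal : u(x) \neq v(x)\}$, so that $C_X(E) = 0$ by hypothesis. I will rely on the fundamental fact, established for Newtonian spaces based on quasi-Banach function lattices in Mal\'{y}~\cite{Mal}, that $C_X(E) = 0$ implies $\Mod_X(\Gamma_E) = 0$; equivalently, $\Mod_X$-a.e.\ curve avoids $E$ entirely.

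Next I would assemble two exceptional curve families. By Lemma~\ref{lem:not_ug_along_curves}, the family $\Gamma_1$ of curves along which $g$ fails to be an upper gradient of $u$ satisfies $\Mod_X(\Gamma_1) = 0$. Combining with the previous step via Lemma~\ref{lem:mod_properties}(b) yields $\Mod_X(\Gamma_1 \cup \Gamma_E) = 0$.

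For every curve $\gamma \notin \Gamma_1 \cup \Gamma_E$, the image $\gamma([0, l_\gamma])$ is disjoint from $E$; in particular $u(\gamma(0)) = v(\gamma(0))$ and $u(\gamma(l_\gamma)) = v(\gamma(l_\gamma))$. Since $g$ is an upper gradient of $u$ along such a $\gamma$, inequality~\eqref{eq:ug_def} with $u$ replaced by $v$ is immediate, with the convention $|(\pm\infty) - (\pm\infty)| = \infty$ handling the case where an endpoint value is infinite. Hence $g$ is an $X$-weak upper gradient of $v$.

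The main obstacle is the cited implication $C_X(E) = 0 \Rightarrow \Mod_X(\Gamma_E) = 0$, which encapsulates essentially all the content. If one had to reprove it within the present framework, the natural route would go through Proposition~\ref{pro:mod0_equiv}: choose $u_k \in \NX$ with $u_k \ge 1$ on $E$ and $\|u_k\|_\NX$ decaying geometrically fast enough to absorb the modulus-of-concavity factors appearing in the Riesz--Fischer property, pick Borel upper gradients $g_k$ with comparable decay, and form $\rho = \sum_k g_k \in X$. A subcurve estimate along any $\gamma \in \Gamma_E$ terminating at a point of $E$, combined with the a.e.\ decay of $u_k$ at the initial endpoint, would then force $\int_\gamma \rho = \infty$ and yield the desired conclusion.
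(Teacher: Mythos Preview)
Your argument is correct and is the standard route to this result. Note, however, that the paper does not actually supply its own proof of Lemma~\ref{lem:equal_qe_same_wug}: Section~\ref{sec:wug} is explicitly a summary of facts imported from Mal\'{y}~\cite{Mal}, and none of the statements there carry proofs. So there is no ``paper's proof'' to compare against beyond the citation, and what you have written is precisely the argument one finds in that reference.

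Two minor comments. First, invoking Lemma~\ref{lem:not_ug_along_curves} is slightly more than you need; the bare definition of $X$-weak upper gradient already gives a family $\Gamma_1$ of zero modulus outside of which \eqref{eq:ug_def} holds for the endpoints of $\gamma$, and that is all your argument uses. Second, your outline for $C_X(E)=0 \Rightarrow \Mod_X(\Gamma_E)=0$ is on the right track, but the cleanest version picks a single $w\in\NX$ with $w\ge 1$ on $E$ and $\|w\|_\NX=0$ (possible since $C_X(E)=0$ and the infimum is attained by summing a fast-decaying sequence), takes an upper gradient $h\in X$ of $w$, and then for any $\gamma\in\Gamma_E$ uses a subcurve from a point where $w=0$ (such points exist along $\Mod_X$-a.e.\ curve since $w=0$ a.e.) to a point of $E$ to force $\int_\gamma h\,ds\ge 1$; iterating with $kw$ and $kh$ gives $\Mod_X(\Gamma_E)=0$ via Proposition~\ref{pro:mod0_equiv}.
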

The definition of an upper gradient uses the convention that $|(\pm \infty) - (\pm \infty)| = \infty$, which can make some calculations somewhat obscure. However, this convention is unnecessary when working with $X$-weak upper gradients as can be seen in the following characterization.
%
% ----------------------------------------------------------
%
\begin{pro}
\label{pro:alt-def-wug}
Let $u: \Pcal \to \overline{\Rbb}$ be a function which is finite a.e. and assume that $g\ge0$ is such that for $\Mod_X$-a.e. curve $\gamma: [0, l_\gamma]\to\Pcal$ it is true that either
\[
  |u(\gamma(0))| = |u(\gamma(l_\gamma))| = \infty
  \quad \mbox{or}
  \quad
  |u(\gamma(0)) - u(\gamma(l_\gamma))| \le \int_\gamma g\,ds.
\]
Then, $g$ is an $X$-weak upper gradient of $u$.
\end{pro}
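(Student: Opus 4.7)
The plan is to verify the weak-upper-gradient inequality $|u(\gamma(0))-u(\gamma(l_\gamma))|\le\int_\gamma g\,ds$ for $\Mod_X$-a.e.\ $\gamma$ (using the convention $|\pm\infty-\pm\infty|=\infty$) by controlling, via a subcurve trick, the only case the hypothesis does not immediately dispatch: curves whose two endpoints both lie in the set $E:=\{x\in\Pcal:|u(x)|=\infty\}$. Since $u$ is finite a.e., $\mu(E)=0$, and Borel regularity supplies a Borel set $D\supset E$ with $\mu(D)=0$.

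First I would assemble three $\Mod_X$-null curve families. Let $\Gamma_0$ be the exceptional family produced by the hypothesis, so $\Mod_X(\Gamma_0)=0$, and let $\Gamma_0^{\mathrm{sub}}$ consist of all curves having some subcurve in $\Gamma_0$; Lemma~\ref{lem:mod_properties}(c) then yields $\Mod_X(\Gamma_0^{\mathrm{sub}})\le\Mod_X(\Gamma_0)=0$. Next, Proposition~\ref{pro:mod0_equiv} applied to the Borel function $\rho:=\infty\cdot\chi_D$, which satisfies $\|\rho\|_X=0$ by (P1) because $\rho=0$ a.e., and for which $\int_\gamma\rho\,ds=\infty$ whenever $\lambda^1(\gamma^{-1}(D))>0$, forces $\Mod_X(\Gamma_D^+)=0$. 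Writing $\Gamma_{\mathrm{bad}}:=\Gamma_0\cup\Gamma_0^{\mathrm{sub}}\cup\Gamma_D^+$, Lemma~\ref{lem:mod_properties}(b) gives $\Mod_X(\Gamma_{\mathrm{bad}})=0$.

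Now fix any $\gamma\notin\Gamma_{\mathrm{bad}}$. If the integral clause of the hypothesis is fulfilled, we are done. Otherwise $|u(\gamma(0))|=|u(\gamma(l_\gamma))|=\infty$, so both endpoints lie in $E\subset D$, yet $\gamma\notin\Gamma_D^+$ ensures $\gamma$ is not pointwise contained in $D$; pick $t^*\in(0,l_\gamma)$ with $\gamma(t^*)\notin D$ and set $\gamma':=\gamma|_{[0,t^*]}$. Since $\gamma\notin\Gamma_0^{\mathrm{sub}}$, the subcurve $\gamma'$ is not in $\Gamma_0$, while $|u(\gamma'(0))|=\infty$ and $u(\gamma'(t^*))$ is finite; hence the first clause of the hypothesis fails for $\gamma'$, and the second clause forces $\int_{\gamma'}g\,ds=\infty$. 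Monotonicity of the curve integral gives $\int_\gamma g\,ds\ge\int_{\gamma'}g\,ds=\infty$, so the required inequality $\infty\le\infty$ holds trivially. The genuinely delicate step is precisely the elimination of curves pointwise inside $D$, where the subcurve trick fails; this is what makes the auxiliary function $\rho=\infty\cdot\chi_D$ essential. Everything else is bookkeeping with the properties of $\Mod_X$ gathered in Lemma~\ref{lem:mod_properties}.
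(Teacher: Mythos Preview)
The paper does not actually supply a proof of Proposition~\ref{pro:alt-def-wug}: Section~\ref{sec:wug} is explicitly a summary of results established in Mal\'{y}~\cite{Mal}, and this proposition is stated there without argument. So there is no in-paper proof to compare against.

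That said, your argument is correct and is essentially the standard one. The key ingredients---passing to the subcurve family $\Gamma_0^{\mathrm{sub}}$ via Lemma~\ref{lem:mod_properties}\,\ref{lem:mod_properties:subcurve_family}, and discarding $\Gamma_D^+$ for a Borel null set $D\supset E$ by testing against $\rho=\infty\cdot\chi_D$---are exactly what is needed, and the subcurve trick to force $\int_{\gamma'}g\,ds=\infty$ when both endpoints of $\gamma$ lie in $E$ is carried out cleanly. Two minor remarks: first, the conclusion that $g$ is an $X$-weak upper gradient presupposes that $g$ is measurable (this is part of the definition), so you are implicitly assuming it; second, to be fully precise about $\int_\gamma g\,ds\ge\int_{\gamma'}g\,ds$ when $g$ is merely measurable rather than Borel, one should invoke Lemma~\ref{lem:mod_ae_equivalence} to replace $g$ by a Borel majorant equal to $g$ a.e., adjoining the corresponding $\Mod_X$-null exceptional family to $\Gamma_{\mathrm{bad}}$. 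Neither point affects the substance of your proof.
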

A pair of Newtonian functions related by pointwise (in)equality a.e. is actually related on a larger set, namely, quasi-everywhere.
%
% ----------------------------------------------------------
%
\begin{pro}
\label{pro:NX-ineq.ae-ineq.qe}
If $u,v \in \NX$ and $u \ge v$ a.e., then $u \ge v$ q.e. In particular, if $u=v$ a.e., then $u=v$ q.e.
\end{pro}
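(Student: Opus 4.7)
The plan is to introduce $w := \max(v - u, 0)$ and prove $\|w\|_{\NX} = 0$. Since $\{u < v\} = \{w > 0\}$, the remark following Definition \ref{df:capacity} (that $\|f\|_{\NX} = 0$ iff $f = 0$ q.e.) then yields $C_X(\{u < v\}) = 0$, i.e.\ $u \geq v$ q.e.; the ``in particular'' statement will follow by symmetry, applying the first assertion to both $(u,v)$ and $(v,u)$. Since $u, v \in \NX$ are finite q.e., I may by Lemma \ref{lem:equal_qe_same_wug} redefine them on a capacity-zero set so that they are finite everywhere, which affects neither the hypothesis nor the conclusion.

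First I would verify $w \in \NX$ with $\|w\|_X = 0$. Measurability is clear, and $w$ vanishes on the complement of $E := \{u < v\}$, which has measure zero by hypothesis, so $\|w\|_X = 0$. The pointwise bound $|\max(a,0) - \max(b,0)| \leq |a-b|$ implies that if $g_u, g_v \in X$ are upper gradients of $u$ and $v$, then $g_u + g_v$ is an upper gradient of $w$ in $X$, so indeed $w \in \NX$.

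The heart of the argument is to show that $0$ is an $X$-weak upper gradient of $w$. Once this is established, Lemma \ref{lem:ug-approx-wug} supplies upper gradients of $w$ of arbitrarily small $X$-norm, whence $\|w\|_{\NX} = \|w\|_X + \inf_h \|h\|_X = 0$. To prove it, I fix any $X$-weak upper gradient $g$ of $w$ (exists since $w \in \NX$) and verify that for $\Mod_X$-a.e.\ curve $\gamma$ all three of the following hold simultaneously: (i) $g$ is an upper gradient of $w$ along $\gamma$, by Lemma \ref{lem:not_ug_along_curves}; (ii) $\int_\gamma g\,ds < \infty$, by Proposition \ref{pro:mod0_equiv} applied with $\rho = g$; (iii) $\lambda^1(\gamma^{-1}(E)) = 0$, obtained by using Borel regularity of $\mu$ to produce a Borel set $B \supset E$ with $\mu(B) = 0$, then applying Proposition \ref{pro:mod0_equiv} to $\rho := \infty \cdot \chi_B$, which is a nonnegative Borel function with $\|\rho\|_X = 0$ and $\int_\gamma \rho\,ds = \infty$ for every $\gamma \in \Gamma_B^+ \supset \Gamma_E^+$. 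The union of the three exceptional families remains $\Mod_X$-null by Lemma \ref{lem:mod_properties}. For any $\gamma$ satisfying (i)--(iii), properties (i) and (ii) make $w \circ \gamma$ absolutely continuous on $[0, l_\gamma]$, while (iii) gives $w \circ \gamma = 0$ on a subset of full $\lambda^1$-measure; continuity then forces $w \circ \gamma \equiv 0$, so $w(\gamma(0)) = w(\gamma(l_\gamma)) = 0$ and the weak upper gradient inequality $|w(\gamma(0)) - w(\gamma(l_\gamma))| \leq \int_\gamma 0\,ds$ holds trivially.

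The main obstacle I anticipate is the proper orchestration of (i)--(iii) and the continuity upgrade from ``zero $\lambda^1$-a.e.\ along $\gamma$'' to ``identically zero along $\gamma$'', which crucially uses absolute continuity of $w \circ \gamma$ rather than mere integrability. Once that step is in place, the remaining pieces assemble quickly through Lemma \ref{lem:ug-approx-wug} and the cited remark.
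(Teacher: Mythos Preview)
The paper does not actually prove Proposition~\ref{pro:NX-ineq.ae-ineq.qe}: Section~\ref{sec:wug} is explicitly a summary of results established in the companion paper Mal\'{y}~\cite{Mal}, and this proposition is stated there without proof. So there is no ``paper's own proof'' to compare your attempt against.

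That said, your argument is correct and is the natural way to prove the result. The reduction to $w=(v-u)^+$, the identification $\|w\|_X=0$, and the core step---showing that $w\circ\gamma$ is absolutely continuous and vanishes $\lambda^1$-a.e.\ along $\Mod_X$-a.e.\ curve, hence vanishes identically along such curves---are all sound, and the orchestration of Lemma~\ref{lem:not_ug_along_curves}, Proposition~\ref{pro:mod0_equiv}, and the $\Mod_X(\Gamma_E^+)=0$ fact for null sets $E$ is exactly right. One small caveat: the assertion that $u,v\in\NX$ are finite q.e.\ is not stated in this paper (it is in~\cite{Mal}), so if you want the proof to be self-contained with respect to what is written here you should either cite that, or sidestep it by defining $w$ to be $0$ at points where $v-u$ is of the indeterminate form $(\pm\infty)-(\pm\infty)$ and checking directly that $g_u+g_v$ remains an $X$-weak upper gradient of $w$ (the exceptional set has measure zero, so Lemma~\ref{lem:mod_ae_equivalence} handles this). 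Either way the argument goes through.
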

The $X$-weak upper gradients $g\in X$ of $u\in DX$ are related to the classical derivative of $u\circ \gamma$ for every curve $\gamma$ on which the function $u$ is absolutely continuous. Note that Dirichlet functions are absolutely continuous on $\Mod_X$-a.e. curve.
%
% ----------------------------------------------------------
%
\begin{lem}
\label{lem:ACC-derivative-vs-ug}
Assume that $u\in DX$ and that $g\in X$ is an $X$-weak upper gradient of $u$. Then, for $\Mod_X$-a.e. curve $\gamma: [0, l_\gamma]\to \Pcal$, we have
\[
  |(u\circ \gamma)'(t)| \le g(\gamma(t)) \quad\mbox{for a.e. }t\in[0, l_\gamma].
\]
\end{lem}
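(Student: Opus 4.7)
The plan is to exploit the fact that $u \circ \gamma$ is absolutely continuous on $\Mod_X$-a.e.\ curve (which the paper has already recorded), together with the upper gradient inequality on arbitrary subcurves, and finish by a one-dimensional Lebesgue differentiation argument. The measurability of $g$ (rather than Borel regularity) forces a small preliminary step, which I handle first.

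First, I would invoke Lemma~\ref{lem:mod_ae_equivalence} to replace $g$ with a Borel representative $\tilde g$ satisfying $\tilde g = g$ a.e.\ and $\int_\gamma \tilde g\, ds = \int_\gamma g\, ds$ for $\Mod_X$-a.e.\ curve. The lattice property gives $\|\tilde g\|_X = \|g\|_X < \infty$, so $\tilde g$ is a Borel $X$-weak upper gradient of $u$. In particular, by Lemma~\ref{lem:not_ug_along_curves}, $\tilde g$ is an upper gradient of $u$ along $\Mod_X$-a.e.\ curve. Moreover, the equality $\int_\gamma |g - \tilde g|\, ds = 0$ holding for $\Mod_X$-a.e.\ curve means that $g(\gamma(t)) = \tilde g(\gamma(t))$ for a.e.\ $t \in [0, l_\gamma]$ on $\Mod_X$-a.e.\ curve.

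Next, I discard a further $\Mod_X$-null family to ensure: (i) $u \circ \gamma$ is absolutely continuous, (ii) $\tilde g$ is an upper gradient of $u$ along $\gamma$, (iii) $\int_\gamma \tilde g\, ds < \infty$ (so that $\tilde g \circ \gamma \in L^1([0, l_\gamma])$), and (iv) $g(\gamma(\cdot)) = \tilde g(\gamma(\cdot))$ a.e. Fix a curve $\gamma$ in the complement. From (i), $(u \circ \gamma)'(t)$ exists for a.e.\ $t$; from (ii) applied to the subcurves $\gamma|_{[t,t+h]}$,
\[
  \bigl| u(\gamma(t+h)) - u(\gamma(t)) \bigr| \le \int_t^{t+h} \tilde g(\gamma(s))\, ds
\]
for every $t$ and sufficiently small $h > 0$. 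By the Lebesgue differentiation theorem applied to $\tilde g \circ \gamma \in L^1([0, l_\gamma])$, the right-hand side divided by $h$ converges to $\tilde g(\gamma(t))$ for a.e.\ $t$. Dividing and passing to the limit therefore yields $|(u\circ\gamma)'(t)| \le \tilde g(\gamma(t)) = g(\gamma(t))$ for a.e.\ $t$, which is the desired conclusion.

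The only genuinely delicate point is item (iii), the finiteness of $\int_\gamma \tilde g\, ds$ for $\Mod_X$-a.e.\ curve, since without it Lebesgue differentiation cannot be invoked. This is, however, an immediate consequence of Proposition~\ref{pro:mod0_equiv}: if the family of curves with $\int_\gamma \tilde g\, ds = \infty$ had positive $X$-modulus, then $\tilde g \in X$ itself would furnish a contradiction to the proposition. Everything else is bookkeeping: a finite union of $\Mod_X$-null curve families is $\Mod_X$-null by Lemma~\ref{lem:mod_properties}\ref{lem:mod_properties:union_0}, so the conditions (i)--(iv) are simultaneously available for $\Mod_X$-a.e.\ curve.
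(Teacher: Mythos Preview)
The paper does not actually prove this lemma: Section~\ref{sec:wug} is explicitly a summary of results established in Mal\'{y}~\cite{Mal}, and Lemma~\ref{lem:ACC-derivative-vs-ug} is stated there without proof. So there is no ``paper's own proof'' to compare against.

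That said, your argument is correct and is the standard one. The reduction to a Borel representative via Lemma~\ref{lem:mod_ae_equivalence}, the invocation of absolute continuity of $u\circ\gamma$ on $\Mod_X$-a.e.\ curve (recorded in the text just before the lemma), the upper-gradient-along-$\gamma$ inequality on subcurves from Lemma~\ref{lem:not_ug_along_curves}, the finiteness of $\int_\gamma \tilde g\,ds$ from Proposition~\ref{pro:mod0_equiv}, and the one-dimensional Lebesgue differentiation of $\tilde g\circ\gamma$ are exactly the ingredients one expects. Your handling of the passage from $\tilde g(\gamma(t))$ back to $g(\gamma(t))$ via $\int_\gamma |g-\tilde g|\,ds = 0$ (equivalently, $\lambda^1(\gamma^{-1}(\{g\ne\tilde g\}))=0$ for $\Mod_X$-a.e.\ curve, which is precisely the content of $\Mod_X(\Gamma_N^+)=0$ for the null set $N=\{g\ne\tilde g\}$) is also fine.
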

%
% ----------------------------------------------------------
% ----------------------------------------------------------
% -------- SECTION 4 - MINIMAL WEAK UPPER GRADIENTS --------
% ----------------------------------------------------------
% ----------------------------------------------------------
%
\section{Minimal weak upper gradients}
\label{sec:minwug}
In this section, we will find a distinctive $X$-weak upper gradient of a given function, which is minimal both pointwise a.e. and normwise among all $X$-weak upper gradients. The following lemmata provide us with tools that will be used in the minimization process. The method we pursue is inspired by the one used by Haj\l{}asz~\cite{Haj2}. First, we shall prove that a pointwise minimum of two $X$-weak upper gradients is again an $X$-weak upper gradient.
%
% ---------------------------------------------------------
%
\begin{lem}
\label{lem:min-g1-g2}
Let $g_1, g_2 \in X$ be $X$-weak upper gradients of $u\in DX$. Then, their pointwise minimum $g\coloneq\min\{g_1, g_2\}\in X$ is an $X$-weak upper gradient of $u$, as well.
\end{lem}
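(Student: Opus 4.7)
The plan is to bypass the curve-splitting that one would naively attempt, and instead argue directly on the pointwise derivative of $u \circ \gamma$ for $\Mod_X$-a.e.\ curve. First, $g \coloneq \min\{g_1, g_2\}$ is measurable as the pointwise minimum of two measurable functions, and $0 \le g \le g_1$ a.e., so the lattice property~\ref{df:BFL.latticeprop} yields $\|g\|_X \le \|g_1\|_X < \infty$. Thus $g \in X$.

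Next, I would apply Lemma~\ref{lem:ACC-derivative-vs-ug} separately to $g_1$ and to $g_2$. Each application provides an exceptional family of zero $X$-modulus outside of which $u \circ \gamma$ is absolutely continuous and satisfies $|(u \circ \gamma)'(t)| \le g_i(\gamma(t))$ for a.e.\ $t \in [0, l_\gamma]$. Lemma~\ref{lem:mod_properties}(b) allows the union of these two families to again be $\Mod_X$-negligible, so for $\Mod_X$-a.e.\ curve $\gamma : [0, l_\gamma] \to \Pcal$,
\[
  |(u \circ \gamma)'(t)| \le \min\{g_1(\gamma(t)), g_2(\gamma(t))\} = g(\gamma(t)) \quad \text{for a.e.\ } t \in [0, l_\gamma].
\]
By Lemma~\ref{lem:mod_ae_equivalence}, $\int_\gamma g \, ds$ coincides with the path integral of any Borel representative of $g$ for $\Mod_X$-a.e.\ curve, and is therefore well defined.

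Integrating the preceding pointwise inequality and invoking the classical fundamental theorem of calculus for the absolutely continuous function $u \circ \gamma$, I would obtain, for $\Mod_X$-a.e.\ $\gamma$,
\[
  |u(\gamma(0)) - u(\gamma(l_\gamma))| = \biggl| \int_0^{l_\gamma} (u \circ \gamma)'(t) \, dt \biggr| \le \int_0^{l_\gamma} g(\gamma(t)) \, dt = \int_\gamma g \, ds,
\]
which shows that $g$ is an $X$-weak upper gradient of $u$. The main obstacle is essentially packaged into Lemma~\ref{lem:ACC-derivative-vs-ug}; without that tool one would have to partition each curve into the pieces where $g_1 \le g_2$ and where $g_2 < g_1$, cope with the fact that such a decomposition is merely measurable rather than a finite partition into subcurves, and sum the upper-gradient estimate over the pieces. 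With the derivative characterization in hand, however, the whole argument collapses to a pointwise comparison of derivatives and a single application of the FTC.
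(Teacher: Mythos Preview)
Your argument is correct and takes a genuinely different route from the paper. The paper proceeds by curve-splitting: after replacing $g_1,g_2$ by Borel representatives, it fixes a curve $\gamma$ along which both $g_1$ and $g_2$ are upper gradients and $\int_\gamma(g_1+g_2)\,ds<\infty$, sets $E=\gamma^{-1}(\{g_1\le g_2\})$, approximates $E$ from outside by relatively open sets $U_n=\bigcup_i I_i$, applies the upper-gradient inequality piecewise on the subintervals $I_i$ (using $g_1$ on $I_i$ and $g_2$ on the complement), and then passes to the limit $n\to\infty$ via the monotone and dominated convergence theorems. You instead short-circuit all of this by invoking Lemma~\ref{lem:ACC-derivative-vs-ug} to obtain the pointwise bound $|(u\circ\gamma)'|\le\min\{g_1\circ\gamma,\,g_2\circ\gamma\}$ a.e.\ on $[0,l_\gamma]$, and then integrate once via the fundamental theorem of calculus for the absolutely continuous function $u\circ\gamma$.

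Both proofs need $g_1,g_2\in X$: the paper uses it to guarantee $\int_\gamma(g_1+g_2)\,ds<\infty$ so that dominated convergence applies, while you need it because Lemma~\ref{lem:ACC-derivative-vs-ug} is stated only for weak upper gradients lying in $X$. Your approach is shorter and conceptually cleaner, but it leans on the heavier fact (packaged into Lemma~\ref{lem:ACC-derivative-vs-ug} and the sentence preceding it) that $u\in DX$ is absolutely continuous along $\Mod_X$-a.e.\ curve. The paper's direct splitting argument avoids that dependency and is in that sense more elementary, at the cost of a longer and more technical proof.
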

\begin{proof}
We may assume that both $g_1$ and $g_2$ are Borel functions due to Lemma \ref{lem:mod_ae_equivalence}. Proposition \ref{pro:mod0_equiv} and Lemma \ref{lem:not_ug_along_curves} ensure that $\Mod_X$-a.e. curve $\gamma$ in $\Pcal$ is such that $g_1$ and $g_2$ are upper gradients of $u$ along $\gamma$, and $\int_\gamma (g_1 + g_2)\,ds<\infty$. Let now $\gamma: [0, l_\gamma]\to \Pcal$ be one such curve and let $E = \gamma^{-1} (\{x\in\Pcal: g_1(x) \le g_2(x)\})$. Then, $E\subset [0, l_\gamma]$ is a Borel set and there is a sequence of relatively open sets $U_1 \supset U_2 \supset \dots \supset E$ such that $\lambda^1(U_n \setminus E) \to 0$ as $n\to \infty$ due to the outer regularity of the Lebesgue measure $\lambda^1$ on $[0, l_\gamma]$. For a fixed $n\in\Nbb$, write $U_n$ as an at most countable union of pairwise disjoint relatively open intervals $I_i$ with endpoints $a_i<b_i$, i.e., $U_n = \bigcup_i I_i$. Then,
\begin{align*}
  |u(\gamma(0)) - u(\gamma(l_\gamma))| & \le |u(\gamma(0)) - u(\gamma(a_1))| + |u(\gamma(a_1)) - u(\gamma(b_1))| \\
  & \quad + |u(\gamma(b_1)) - u(\gamma(l_\gamma))| \\
  & \le \int_{\gamma|_{I_1}} g_1\,ds + \int_{\gamma - \gamma|_{I_1}} g_2\,ds.
\end{align*}
Splitting the interval $[0, l_\gamma]$ further with respect to the intervals $I_i$, we obtain
\[
  |u(\gamma(0)) - u(\gamma(l_\gamma))| \le \int_{\gamma|_{\bigcup_{i=1}^j I_i}} g_1\,ds + \int_{\gamma - \gamma|{}_{\bigcup_{i=1}^j I_i}} g_2\,ds, \quad\mbox{whenever $j\in\Nbb$.}
\]
Therefore,
\[
  |u(\gamma(0)) - u(\gamma(l_\gamma))| \le \int_{\gamma|_{U_n}} g_1\,ds + \int_{\gamma - \gamma|_{U_n}} g_2\,ds,
\]
where the potential passing to a limit as $j\to \infty$ is justified by the monotone and dominated convergence theorems, respectively. Applying these theorems again and letting $n\to\infty$ yield
\[
  |u(\gamma(0)) - u(\gamma(l_\gamma))| \le \int_{\gamma|_E} g_1\,ds + \int_{\gamma - \gamma|_E} g_2\,ds = \int_\gamma g\,ds.
\]
The lattice property \ref{df:BFL.latticeprop} ensures that $g\in X$.
\end{proof}
%
% ----------------------------------------------------------
%
\begin{rem}
The assumption $g_1, g_2 \in X$ in the previous lemma is used when applying Proposition \ref{pro:mod0_equiv} to obtain $\int_\gamma (g_1 + g_2)\,ds<\infty$ which allows us to use the dominated convergence theorem later on. This assumption cannot be omitted as can be seen from the following example.
\end{rem}
\begin{exa}
\label{exa:cantor_infty_ug}
Let $A\subset[0,1]$ be a Borel set such that $0< \lambda^1(A \cap I) < \lambda^1(I)$ for every non-trivial interval $I\subset[0,1]$. Then, both $g_1 = \infty \chi_A$ and $g_2 = \infty \chi_{[0,1]\setminus A}$ are upper gradients of any measurable function on $[0,1]$, however, their pointwise minimum is identically zero.
\end{exa}
Next, we show how a limit of a decreasing sequence of functions translates into a limit of path integrals.
%
% ----------------------------------------------------------
%
\begin{lem}
\label{lem:fuglede-type}
Let $\{g_k\}_{k=1}^\infty$ be a decreasing sequence of non-negative functions in $X$. Let $g(x) = \lim_{k\to\infty} g_k(x)$ for every $x\in\Pcal$. Then, $g\in X$ and
\[
  \int_\gamma g_k\,ds \to \int_\gamma g\,ds
\]
for $\Mod_X$-a.e. curve $\gamma$ as $k\to\infty$.
\end{lem}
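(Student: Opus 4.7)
The plan is to split the lemma into two claims and verify them in turn. First, the inclusion $g\in X$ is immediate: since $0\le g\le g_1$ pointwise and $g_1\in X$, the lattice property~\ref{df:BFL.latticeprop} yields $\|g\|_X\le\|g_1\|_X<\infty$. The nontrivial content is the convergence of path integrals for $\Mod_X$-almost every curve, which I would prove by reducing to a Borel setting where a classical dominated convergence argument applies along the curve.

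For this reduction, I would first use Lemma~\ref{lem:mod_ae_equivalence} to pick Borel representatives $\tilde g_k$ with $\tilde g_k=g_k$ a.e., and then replace $\tilde g_k$ by the running pointwise minimum $\min_{j\le k}\tilde g_j$ (still Borel). After this modification $\{\tilde g_k\}_{k=1}^\infty$ is a genuinely everywhere-decreasing sequence of Borel functions, and we still have $\tilde g_k=g_k$ a.e.\ for each $k$. Define $\tilde g(x)=\lim_{k\to\infty}\tilde g_k(x)$ pointwise everywhere; this $\tilde g$ is Borel and equals $g$ a.e. Lemma~\ref{lem:mod_ae_equivalence}, combined with Lemma~\ref{lem:mod_properties}\ref{lem:mod_properties:union_0} to handle countably many exceptional families at once, supplies a single $\Mod_X$-null family of curves outside which $\int_\gamma g_k\,ds=\int_\gamma\tilde g_k\,ds$ for every $k$ and $\int_\gamma g\,ds=\int_\gamma\tilde g\,ds$. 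Hence it suffices to establish the convergence for the tilde versions.

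The key step is then to secure a $\gamma$-integrable majorant. Since $\tilde g_1\in X$ is Borel, Proposition~\ref{pro:mod0_equiv} applied with $\rho=\tilde g_1$ shows that $\Gamma_0=\{\gamma\in\Gamma(\Pcal):\int_\gamma\tilde g_1\,ds=\infty\}$ satisfies $\Mod_X(\Gamma_0)=0$. For any curve $\gamma:[0,l_\gamma]\to\Pcal$ outside $\Gamma_0$ and the previously collected $\Mod_X$-null families, the sequence $t\mapsto\tilde g_k(\gamma(t))$ decreases pointwise on $[0,l_\gamma]$ to $\tilde g(\gamma(t))$ and is dominated by $\tilde g_1\circ\gamma$, whose Lebesgue integral over $[0,l_\gamma]$ is finite. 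Monotone (equivalently dominated) convergence on $[0,l_\gamma]$ then yields $\int_\gamma\tilde g_k\,ds\to\int_\gamma\tilde g\,ds$, which by the reduction above is the desired statement $\int_\gamma g_k\,ds\to\int_\gamma g\,ds$.

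The main obstacle, modest but genuine, is the standard tension between pointwise-everywhere hypotheses (on $g_k,g$) and the Borel representatives needed to make path integrals well-behaved: this is why the running-minimum trick is necessary, as it converts a.e.\ monotonicity into everywhere monotonicity along each curve without disturbing the a.e.\ identification with the original sequence. Once this is arranged, the finite majorant supplied by Proposition~\ref{pro:mod0_equiv} reduces the lemma to a classical convergence theorem on the parameter interval $[0,l_\gamma]$.
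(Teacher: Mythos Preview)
Your proof is correct and follows essentially the same strategy as the paper's: invoke Lemma~\ref{lem:mod_ae_equivalence} to make the path integrals well defined, use Proposition~\ref{pro:mod0_equiv} to obtain a $\gamma$-integrable majorant (namely $g_1$ or its Borel representative), and then apply dominated convergence on $[0,l_\gamma]$, with Lemma~\ref{lem:mod_properties}\ref{lem:mod_properties:union_0} collecting the countably many exceptional families. The paper's version is terser and does not spell out the running-minimum device you use to force the Borel representatives to be everywhere decreasing; this is a welcome piece of bookkeeping on your part rather than a different route.
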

\begin{proof}
The lattice property \ref{df:BFL.latticeprop} immediately implies that $g\in X$. The integral $\int_\gamma g\,ds$ is well defined and has a value in $[0, \infty)$ for $\Mod_X$-a.e. curve $\gamma$ by Lemma \ref{lem:mod_ae_equivalence} and Proposition~\ref{pro:mod0_equiv}. Let $\Gamma_\infty$ denote the exceptional family of curves for which the integral $\int_\gamma g\,ds$ does not have a real value. Similarly, $\int_\gamma g_k\,ds$ is well defined and has a value in $[0, \infty)$ for all curves $\gamma$ outside of a curve family $\Gamma_k$ whose modulus is $0$ for each $k\in\Nbb$. Considering an arbitrary curve $\gamma \in \Gamma(\Pcal) \setminus (\Gamma_\infty \cup \bigcup_{k=1}^\infty \Gamma_k)$, we obtain the expected convergence of integrals by the dominated convergence theorem. Note that Lemma \ref{lem:mod_properties}\,\ref{lem:mod_properties:union_0} implies that $\Mod_X (\Gamma_\infty \cup \bigcup_{k=1}^\infty \Gamma_k) = 0$.
\end{proof}
The following proposition shows that the set of $X$-weak upper gradients is stable under taking pointwise limits of decreasing sequences of $X$-weak upper gradients of a given function.
%
% ----------------------------------------------------------
%
\begin{pro}
\label{pro:decr_lim_of_wugs_is_wug}
Let $u\in DX$ and let $\{g_k\}_{k=1}^\infty$ be a decreasing sequence of non-negative functions in $X$ all of which are $X$-weak upper gradients of $u$. For $x\in \Pcal$, we define the function $g(x) = \lim_{k\to\infty} g_k(x)$. Then, $g\in X$ is an $X$-weak upper gradient of $u$.
\end{pro}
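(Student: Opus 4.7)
The plan is to combine the previous Fuglede-type lemma with countable subadditivity of $\Mod_X$ and then pass to the limit in the weak-upper-gradient inequality along $\Mod_X$-a.e. curve.

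First, I note that $g \in X$ is immediate: since $0 \le g \le g_1$ everywhere and $g_1 \in X$, the lattice property \ref{df:BFL.latticeprop} gives $\|g\|_X \le \|g_1\|_X < \infty$. (The function $g$ is measurable as a pointwise limit of measurable functions.)

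Next, I would assemble the exceptional curve families. For each $k\in\Nbb$, let $\Gamma_k$ denote the family of curves $\gamma:[0,l_\gamma]\to\Pcal$ for which the defining inequality
\[
  |u(\gamma(0)) - u(\gamma(l_\gamma))| \le \int_\gamma g_k\,ds
\]
fails; by assumption $\Mod_X(\Gamma_k)=0$. Let $\Gamma_\infty$ be the exceptional family from Lemma \ref{lem:fuglede-type}, i.e., the family of curves $\gamma$ for which $\int_\gamma g_k\,ds \not\to \int_\gamma g\,ds$ (or one of these integrals is undefined); then $\Mod_X(\Gamma_\infty)=0$ as well. By Lemma \ref{lem:mod_properties}\,\ref{lem:mod_properties:union_0}, the union $\Gamma = \Gamma_\infty \cup \bigcup_{k=1}^\infty \Gamma_k$ still satisfies $\Mod_X(\Gamma)=0$.

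Finally, I would pick any curve $\gamma \in \Gamma(\Pcal) \setminus \Gamma$. For such a curve, the inequality $|u(\gamma(0)) - u(\gamma(l_\gamma))| \le \int_\gamma g_k\,ds$ holds for every $k\in\Nbb$, and by Lemma \ref{lem:fuglede-type} the right-hand side converges to $\int_\gamma g\,ds$. Letting $k\to\infty$ yields
\[
  |u(\gamma(0)) - u(\gamma(l_\gamma))| \le \int_\gamma g\,ds
\]
for every $\gamma \notin \Gamma$, which means that $g$ is an $X$-weak upper gradient of $u$.

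I do not anticipate any real obstacle here: once Lemmata \ref{lem:mod_ae_equivalence}, \ref{lem:fuglede-type}, and the countable subadditivity of $\Mod_X$ are in hand, the argument is a routine bookkeeping of null curve families plus a pointwise limit. The only subtlety worth noting is that measurability (rather than Borel regularity) of $g$ is fine, since the notion of $X$-weak upper gradient only requires measurability, and any issue about the pointwise value of $\int_\gamma g\,ds$ on a negligible set of curves is absorbed into $\Gamma_\infty$ via Lemma \ref{lem:fuglede-type}.
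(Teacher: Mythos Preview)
Your proof is correct and follows essentially the same route as the paper: assemble the null families $\Gamma_k$ where $g_k$ fails the weak-upper-gradient inequality, adjoin the exceptional family from Lemma~\ref{lem:fuglede-type}, take their countable union (null by Lemma~\ref{lem:mod_properties}\,\ref{lem:mod_properties:union_0}), and pass to the limit on the remaining curves. The only cosmetic difference is that the paper records $g\in X$ at the end rather than the beginning.
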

\begin{proof}
Let $\Gamma_k \subset \Gamma(\Pcal)$ be the family of exceptional curves $\gamma$ along which $g_k$ does not satisfy \eqref{eq:ug_def}, $k\in\Nbb$. Let $\widetilde{\Gamma} \subset \Gamma(\Pcal)$ be the family of curves for which
\[
  \int_\gamma g_k\,ds \nrightarrow \int_\gamma g\,ds\quad\mbox{as $k\to\infty$.}
\]
Lemmata \ref{lem:fuglede-type} and \ref{lem:mod_properties}\,\ref{lem:mod_properties:union_0} imply that $\Mod_X(\widetilde{\Gamma} \cup \bigcup_{k=1}^\infty \Gamma_k) = 0$. If we now consider a~curve $\gamma \in \Gamma(\Pcal) \setminus (\widetilde{\Gamma} \cup \bigcup_{k=1}^\infty \Gamma_k)$, then
\[
  |u(\gamma(0)) - u(\gamma(l_\gamma))| \le \lim_{k\to\infty} \int_\gamma g_k\,ds = \int_\gamma g\,ds,
\]
whence $g$ is an $X$-weak upper gradient of $u$. The lattice property \ref{df:BFL.latticeprop} of $X$ immediately yields $g\in X$.
\end{proof}
Next, we state and prove the main theorem of the paper. We shall find an $X$-weak upper gradient which minimizes the energy functional $\inf_g \|g\|_X$ from \eqref{eq:NX_norm}, i.e., from the definition of $\|\cdot\|_\NX$. We will actually find an $X$-weak upper gradient which is minimal pointwise a.e. among all $X$-weak upper gradients in $X$ and the lattice property \ref{df:BFL.latticeprop} then implies that it has the minimal norm in $X$. Note that the following theorem is considerably more general than Mocanu's result \cite[Theorem~1]{Moc3} as we do not require $X$ to be either reflexive, or strictly convex.
%
% ---------------------------------------------------------
%
\begin{thm}
\label{thm:min_wug_exists}
Let $u\in DX$. Then, there is a \emph{minimal $X$-weak upper gradient} $g_u\in X$ of $u$, i.e., $g_u \le g$ a.e. for all $X$-weak upper gradients $g\in X$ of $u$. Moreover, $g_u$ is unique up to sets of measure zero.
\end{thm}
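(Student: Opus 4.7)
The plan is to realize the pointwise essential infimum of the family
\[
  A \coloneq \{g \in X : g \ge 0 \text{ is an $X$-weak upper gradient of } u\}
\]
as an actual element of $A$. Since $u \in DX$, the set $A$ is non-empty, and Lemma~\ref{lem:min-g1-g2} shows that it is closed under pointwise minima of finite collections, so it is downward directed.

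The main obstacle is that a plain minimization of $\|g\|_X$ need not deliver a pointwise minimizer when the quasi-norm is not strictly monotone, as happens on $L^\infty$. To circumvent this I would replace $\|\cdot\|_X$ by a strictly monotone auxiliary functional. Since $\mu$ is $\sigma$-finite, fix a finite Borel measure $\nu$ on $\Pcal$ that is mutually absolutely continuous with $\mu$ (obtained by weighting $\mu$ by a strictly positive integrable function on a countable covering by finite-measure sets). Set
\[
  \Phi(g) \coloneq \int_\Pcal \arctan(g)\,d\nu, \qquad \beta \coloneq \inf_{g \in A} \Phi(g),
\]
so that $\Phi$ takes values in $[0, \tfrac{\pi}{2}\nu(\Pcal)]$ and $\beta$ is finite.

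Next I would pick a sequence $\{h_k\}_{k=1}^\infty \subset A$ with $\Phi(h_k) \to \beta$ and replace it by $g_k \coloneq \min\{h_1, \ldots, h_k\}$. By induction using Lemma~\ref{lem:min-g1-g2} each $g_k$ lies in $A$, and since $g_k \le h_k$ we have $\beta \le \Phi(g_k) \le \Phi(h_k) \to \beta$. Proposition~\ref{pro:decr_lim_of_wugs_is_wug} then guarantees that the pointwise limit $g_u \coloneq \lim_k g_k$ still belongs to $A$, while the bounded convergence theorem applied to $\arctan g_k \downarrow \arctan g_u$ on the finite measure space $(\Pcal, \nu)$ yields $\Phi(g_u) = \beta$.

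Finally, to prove pointwise minimality, fix any $g \in A$ and apply Lemma~\ref{lem:min-g1-g2} once more to obtain $\min\{g_u, g\} \in A$. The minimality of $\beta$ forces $\Phi(\min\{g_u, g\}) \ge \beta = \Phi(g_u)$, while $\min\{g_u, g\} \le g_u$ yields the reverse inequality. Strict monotonicity of $\arctan$, combined with $\nu$ being mutually absolutely continuous with $\mu$, then gives $\min\{g_u, g\} = g_u$ $\mu$-a.e., i.e., $g_u \le g$ a.e. Uniqueness up to measure-zero sets is immediate: if both $g_u$ and $\tilde g_u$ satisfy the conclusion, then each dominates the other a.e.
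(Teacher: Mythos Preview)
Your proof is correct and follows essentially the same route as the paper: both construct a strictly monotone auxiliary functional on the set of $X$-weak upper gradients (the paper uses $Jf = \|f\|_X + \sum_n 2^{-n}\fint_{\Pcal_n} \frac{|f|}{1+|f|}\,d\mu$, you use $\Phi(g)=\int_\Pcal \arctan g\,d\nu$ for an equivalent finite measure $\nu$), take a minimizing sequence, replace it by its running pointwise minima via Lemma~\ref{lem:min-g1-g2}, pass to the decreasing limit via Proposition~\ref{pro:decr_lim_of_wugs_is_wug}, and then use strict monotonicity to upgrade ``minimal value of the functional'' to ``pointwise a.e.\ minimal''. Your packaging is in fact slightly tidier, since the $\|f\|_X$ summand in the paper's $J$ plays no role in the final contradiction step and can be dropped.
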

\begin{proof}
Since $\mu$ is $\sigma$-finite, we can find sets $\Pcal_n \subset \Pcal$ such that $\Pcal = \bigcup_{n=1}^\infty \Pcal_n$ and $\mu(\Pcal_n)\in(0, \infty)$ for each $n\in \Nbb$. Therefore, we can define a quasi-additive functional $J: X\to [0, \infty)$ by 
\[
  Jf = \|f\|_X + \sum_{n=1}^\infty 2^{-n} \fint_{\Pcal_n} \frac{|f|}{1+|f|}\,d\mu.
\]
Then, $\|f\|_X \le Jf \le \|f\|_X + 1$ for all functions $f\in X$. Moreover, the functional is monotone, i.e., if $f\le h$ a.e., then $Jf \le Jh$. Let $u\in DX$ be given and let
\[
  I = \inf\{Jg\colon g\in X\mbox{ is an $X$-weak upper gradient of }u\}.
\]
Then, $I<\infty$ and there is a sequence $\{g_k\}_{k=1}^\infty \subset X$ of $X$-weak upper gradients of $u$ such that $Jg_k \to I$ as $k\to \infty$. Now, we can define $h_m = \min\{g_k\colon 1\le k \le m\}$ pointwise for all $m\in\Nbb$. All functions $h_m\in X$, $m\in\Nbb$, are $X$-weak upper gradients of $u$ by applying Lemma~\ref{lem:min-g1-g2} repeatedly, therefore, $I\le Jh_m$. On the other hand, $h_m \le g_m$ everywhere in $\Pcal$. Thus, $Jh_m \le Jg_m$, whence $Jh_m \to I$ as $m\to \infty$. We can define $h(x)=\lim_{m\to\infty} h_m(x)$ for $x\in\Pcal$. Proposition \ref{pro:decr_lim_of_wugs_is_wug} yields that $h\in X$ is an $X$-weak upper gradient of $u$, as well, and thus $Jh \ge I$. However, $Jh\le \lim_{m\to\infty} Jh_m = I$, whence $Jh = I$.

It remains to prove that $h$ is the minimal $X$-weak upper gradient of $u$. Suppose there is $h' \in X$ such that $h' < h$ on a set $A \subset \Pcal_a$ of positive measure  for some $a\in \Nbb$ (among possibly others). Let $g = \min\{h , h'\}$. Then, $g\in X$ is also an $X$-weak upper gradient of $u$ by Lemma~\ref{lem:min-g1-g2}. Moreover, $g < h$ on $A$ while $g\le h$ on $\Pcal$. Consequently, $I\le Jg \le Jh = I$.

Since $\int_A g/(1+g)\,d\mu < \int_A h/(1+h)\,d\mu$, we obtain 
\[
  I = Jg = \|g\|_X + 2^{-a} \fint_{\Pcal_a} \frac{g}{1+g} + \sum_{\substack{n\in \Nbb\\ n\neq a}} 2^{-n} \fint_{\Pcal_n} \frac{g}{1+g} < Jh = I
\]
which is a contradiction. Therefore, the inequality $h'<h$ holds on a set of zero measure. In other words, $h'\ge h$ a.e. on $\Pcal$.
\end{proof}
%
% -----------------------------------------------------------
%
\begin{rem}
In the further text, $g_u$ will denote the minimal $X$-weak upper gradient of $u\in DX$. We will consider $g_u$ to be defined everywhere in $\Pcal$ even though there is some freedom in choosing its representative.
As already mentioned, the minimal $X$-weak upper gradient $g_u\in X$ of $u\in DX$ satisfies $\|g_u\|_X \le \|g\|_X$ for all $X$-weak upper gradients $g\in\Mcal(\Pcal, \mu)$ of~$u$. Observe however that the pointwise inequality holds only for $X$-weak upper gradients $g\in X$ of $u$. The following example shows that it might fail otherwise.
\end{rem}
\begin{exa}
\label{exa:cantor_infty_gradient}
Suppose $X\subset L^1([0,1])$. Similarly as in Example \ref{exa:cantor_infty_ug}, let $A \subset [0, 1]$ be a Borel set which satisfies $0<\lambda^1(A \cap I)<\lambda^1(I)$ for all non-degenerate intervals $I \subset [0,1]$. Then, $g= \infty \chi_A$ is an upper gradient of any function on $[0,1]$. Let $u(x) = x$ for $x\in [0,1]$. Thus, $g_u = 1$ a.e. as proven in Lemma~\ref{lem:Lip-min_wug} below. Nevertheless, $g_u > g$ on $[0,1]\setminus A$, i.e., on a set of positive measure.
\end{exa}
Existence of a minimal $X$-weak upper gradient guarantees that we may replace the infimum in the definition of the Newtonian quasi-seminorm by the $X$-norm of the minimal $X$-weak upper gradient.
%
% ---------------------------------------------------------
%
\begin{cor}
\label{cor:NX-norm_via_min-wug}
If $u\in \NX$, then $\|u\|_{\NX} = \|u\|_X + \|g_u\|_X$.
\end{cor}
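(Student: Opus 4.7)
The plan is to prove $\inf_g \|g\|_X = \|g_u\|_X$, where the infimum in the definition of $\|\cdot\|_\NX$ ranges over all upper gradients $g$ of $u$; the conclusion then follows immediately from \eqref{eq:NX_norm}. Since $u \in \NX \subset DX$, the minimal $X$-weak upper gradient $g_u \in X$ exists by Theorem~\ref{thm:min_wug_exists}, and moreover the infimum is taken over a non-empty set (there is at least one upper gradient of $u$ lying in $X$).

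For the lower bound $\|g_u\|_X \le \inf_g \|g\|_X$, I would take any upper gradient $g \in X$ of $u$. Every upper gradient is in particular an $X$-weak upper gradient, so Theorem~\ref{thm:min_wug_exists} gives $g_u \le g$ a.e. The lattice property \ref{df:BFL.latticeprop} then yields $\|g_u\|_X \le \|g\|_X$, and taking the infimum over all such $g$ gives the desired inequality.

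For the reverse inequality, $g_u$ is an $X$-weak upper gradient, so Lemma~\ref{lem:ug-approx-wug} provides some $\rho \in X$ such that $g_u + \rho/k$ is an (honest) upper gradient of $u$ for every $k\in\Nbb$. Consequently
\[
  \inf_g \|g\|_X \le \|g_u + \rho/k\|_X \quad \text{for every } k\in\Nbb.
\]
Since $\|\rho/k\|_X \to 0$ as $k\to\infty$ and the quasi-norm $\|\cdot\|_X$ is continuous, $\|g_u + \rho/k\|_X \to \|g_u\|_X$, giving $\inf_g \|g\|_X \le \|g_u\|_X$.

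There is no real obstacle; the corollary is essentially a direct consequence of Theorem~\ref{thm:min_wug_exists} combined with the approximation Lemma~\ref{lem:ug-approx-wug}, the only mildly delicate point being that $g_u$ need not itself be an upper gradient, which is precisely why the approximation lemma is needed in the upper-bound step.
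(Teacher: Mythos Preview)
Your proof is correct and follows essentially the same approach as the paper's own argument: both directions rely on Theorem~\ref{thm:min_wug_exists} for the lower bound and on the approximation Lemma~\ref{lem:ug-approx-wug} (together with continuity of the quasi-norm) for the upper bound. Your write-up is in fact slightly more detailed in spelling out the use of the lattice property and the continuity assumption, but the substance is identical.
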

\begin{proof}
By Lemma \ref{lem:ug-approx-wug}, we can find a sequence of upper gradients $\{g_j\}_{j=1}^\infty$ of $u$ such that $g_j \to g_u$ in $X$ as $j\to\infty$. Therefore,
\[
  \|u\|_\NX \le \|u\|_X + \lim_{j\to \infty} \|g_j\|_X = \|u\|_X + \|g_u\|_X\,.
\]
On the other hand, $\|g_u\|_X \le \|g\|_X$ for every upper gradient $g$ of $u$. Hence,
\[
  \|u\|_\NX = \|u\|_X + \inf_g \|g\|_X \ge \|u\|_X + \|g_u\|_X\,,
\]
where the infimum is taken over all upper gradients $g$ of $u$.
\end{proof}
We shall see that it is possible to find some representation formulae for minimal $X$-weak upper gradients. The proof relies heavily on Lebesgue's differentiation theorem, which reads
\[
  \lim_{r \to 0} \fint_{B(x,r)} f\,d\mu = f(x) \quad \mbox{for a.e. }x\in \Pcal
\]
whenever $f\in L^1_\loc(\Pcal)$. Lebesgue's differentiation theorem is known to hold when $\mu$ is a doubling measure, i.e., when there is $C\ge 1$ such that $\mu(2B) \le C \mu(B)$ for every ball $B\subset \Pcal$, or more generally when the Vitali covering theorem holds. For further details on Lebesgue's differentiation theorem, see Bj\"{o}rn and Bj\"{o}rn \cite[Section 2.10]{BjoBjo}.
%
% ---------------------------------------------------------
%
\begin{thm}
\label{thm:min_wug_formula}
Suppose that Lebesgue's differentiation theorem holds for $(\Pcal, \mu)$. Let $\vphi\in\Ccal([0, \infty])$ be a strictly increasing non-negative function with $\vphi(0)=0$  such that $\vphi\circ |f| \in L^1_\loc(\Pcal)$ whenever $f\in X$. Suppose further that one of the following two conditions is fulfilled:
\begin{enumerate}
  \item $\vphi$ is subadditive, i.e., $\vphi(x+y) \le \vphi(x) + \vphi(y)$ for any $x,y\in[0, \infty];$
  \item \label{itm:minkowski-type}
  $\vphi$ supports a Minkowski-type inequality, i.e., the inequality
  \[
    \vphi^{-1}\Bigl( \fint_E \vphi \circ (f + h)\,d\mu\Bigr) \le \vphi^{-1}\Bigl( \fint_E \vphi \circ f\,d\mu\Bigr)  + \vphi^{-1}\Bigl( \fint_E \vphi \circ h \,d\mu\Bigr) 
  \]
  holds for any measurable $E\subset \Pcal$ of finite measure, and any non-negative measurable functions $f$ and $h$.
\end{enumerate}
Let $u\in DX$ and define
\[
  g_1(x)  = \inf_g \limsup_{r\to 0^+} \vphi^{-1} \Bigl(\fint_{B(x,r)} \vphi\circ g\,d\mu\Bigr), \quad x\in\Pcal,
\]
where the infimum is taken over all $X$-weak upper gradients $g\in X$ of $u$. Let further $g_2$ be defined similarly but taking the infimum only over all upper gradients $g\in X$ of $u$.
Then, $g_1 = g_2 = g_u$ a.e., and thus both $g_1$ and $g_2$ are minimal $X$-weak upper gradients of $u$.
\end{thm}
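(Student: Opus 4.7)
The plan is to sandwich both $g_1$ and $g_2$ between $g_u$ and $g_u$ via two applications of Lebesgue's differentiation theorem combined with the minimality of $g_u$ established in Theorem~\ref{thm:min_wug_exists}. I first establish the upper bound $g_2 \le g_u$ a.e. By Lemma~\ref{lem:ug-approx-wug} there exists $\rho \in X$ such that $g_u + \rho/k$ is an upper gradient of $u$ for every $k \in \Nbb$. Since $g_u + \rho/k \in X$, the hypothesis forces $\vphi \circ (g_u + \rho/k) \in L^1_\loc(\Pcal)$, so Lebesgue's differentiation theorem gives
\[
  \lim_{r\to 0^+} \vphi^{-1}\Bigl(\fint_{B(x,r)} \vphi \circ (g_u + \rho/k)\,d\mu\Bigr) = g_u(x) + \rho(x)/k
\]
for a.e.\ $x$, whence $g_2(x) \le g_u(x) + \rho(x)/k$ a.e. Taking a countable union of null sets over $k$ and sending $k \to \infty$ (using $\rho < \infty$ a.e., valid because $\rho \in X$) yields $g_2 \le g_u$ a.e. The pointwise inequality $g_1 \le g_2$ is immediate: every upper gradient is an $X$-weak upper gradient, so the infimum defining $g_1$ ranges over a larger family.

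For the matching lower bound $g_u \le g_1$ a.e., fix an arbitrary $X$-weak upper gradient $g \in X$ of $u$. Minimality of $g_u$ yields $g_u \le g$ a.e.; monotonicity of $\vphi$ propagates this through composition and integration, so
\[
  \vphi^{-1}\Bigl(\fint_{B(x,r)} \vphi \circ g_u\,d\mu\Bigr) \le \vphi^{-1}\Bigl(\fint_{B(x,r)} \vphi \circ g\,d\mu\Bigr)
\]
for every $x \in \Pcal$ and every $r>0$. Taking $\limsup_{r\to 0^+}$ and applying Lebesgue's differentiation theorem on the left-hand side to $\vphi \circ g_u \in L^1_\loc(\Pcal)$, I obtain $g_u(x) \le \limsup_{r\to 0^+}\vphi^{-1}(\fint_{B(x,r)} \vphi \circ g\,d\mu)$ for every $x$ outside a null set $E$ \emph{that depends only on $g_u$}, not on $g$. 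Passing to the pointwise infimum over $g$ then gives $g_u \le g_1$ a.e. Combining with the previous paragraph, $g_u \le g_1 \le g_2 \le g_u$ a.e., so $g_1 = g_2 = g_u$ a.e.

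The main pitfall I anticipate is the naive strategy of first observing, for each individual $g$, that $\tilde g(x) := \limsup_{r\to 0^+}\vphi^{-1}(\fint_{B(x,r)} \vphi \circ g\,d\mu)$ equals $g(x)$ a.e.\ by Lebesgue's differentiation theorem and only then taking the infimum; this fails because the exceptional null sets depend on $g$ while the infimum ranges over an uncountable family. The key move is to apply minimality of $g_u$ \emph{before} passing to the limit, which pins the exceptional set to the Lebesgue points of the single $L^1_\loc$ function $\vphi \circ g_u$. I note that the skeleton above does not explicitly invoke hypotheses (a) or (b) on $\vphi$; those appear to support alternative strategies, for instance showing directly that $\tilde g$ is itself an $X$-weak upper gradient by propagating the defining inequality through curve integrals via a subadditivity- or Minkowski-type estimate, in which case $\tilde g \ge g_u$ a.e.\ would follow straight from minimality.
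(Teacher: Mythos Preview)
Your argument is correct and in fact cleaner than the paper's. The lower bound $g_u \le g_1$ is handled identically in both: one applies Lebesgue's theorem only to $\vphi\circ g_u$, so the null set is fixed before taking the infimum. The difference lies in the upper bound $g_2 \le g_u$. The paper does not apply Lebesgue's theorem directly to $\vphi\circ(g_u+\rho/j)$; instead it first splits $\vphi(g_u+\rho/j)$ via either subadditivity (case (a)) or the Minkowski-type inequality (case (b)) into a $g_u$-piece and a $\rho/j$-piece, and then applies Lebesgue's theorem to each piece separately. Your observation that one can simply apply Lebesgue's theorem to the single function $\vphi\circ(g_u+\rho/k)\in L^1_\loc$ for each of the countably many $k$, and then take a countable union of exceptional null sets, bypasses the splitting step entirely. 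As you correctly note, this means your proof never invokes hypotheses (a) or (b); so you have actually shown that the representation formula $g_1=g_2=g_u$ a.e.\ holds under the remaining hypotheses alone (continuity and strict monotonicity of $\vphi$, $\vphi(0)=0$, and $\vphi\circ|f|\in L^1_\loc$ for $f\in X$). The paper's route is thus strictly less general, though the extra hypotheses are harmless in all the concrete examples discussed there.
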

\begin{proof}
Note that $\vphi^{-1}$ is a continuous function because $\vphi$ is assumed continuous on the compactified interval $[0, \infty]$. The function $\vphi\circ g$ is locally integrable for every non-negative $g\in X$. Thus, Lebesgue's differentiation theorem applies. Let
\[
  g_u^*(x) = \limsup_{r\to 0^+} \vphi^{-1} \Bigl( \fint_{B(x,r)} \vphi\circ g_u\,d\mu\Bigr), \quad x\in\Pcal.
\]
Then, $\vphi \circ g_u^* = \vphi\circ g_u$ a.e. On the other hand, we have $\vphi\circ g_u \le \vphi \circ g$ a.e. for every $X$-weak upper gradient $g\in X$ of $u$, and hence the integral means of $\vphi\circ g_u$ are dominated by the integral means of $\vphi\circ g$. Thus, $g_u^* \le g_1 \le g_2$ a.e.
Due to Lemma \ref{lem:ug-approx-wug}, there is a non-negative Borel function $\rho\in X$ such that all functions $g_u + \rho/j$, $j\in \Nbb$, are upper gradients of $u$. Let 
\[
  M = \{ x\in \Pcal: \rho(x) < \infty \} \cap \bigcap_{j=1}^\infty \biggl\{x\in \Pcal: \mbox{ $x$ is a Lebesgue point of $\vphi\biggl(\dfrac{\rho(\cdot)}{j}\biggr)$ }\biggr\}.
\]
Then, $\mu(\Pcal \setminus M) = 0$ as $\rho\in X$ is finite a.e. and Lebesgue's differentiation theorem applies to all functions $\vphi\Bigl(\tfrac{\rho(\cdot)}{j}\Bigr)$, $j\in\Nbb$, since they are locally integrable.

Suppose first that $\vphi$ is subadditive, which gives for $x\in M$ that
\begin{align*}
  \vphi(g_2(x)) & \le \limsup_{r\to 0^+} \fint_{B(x,r)} \vphi\biggl(g_u(y) + \frac{\rho(y)}{j}\biggr)\,d\mu(y) \\
  & \le \limsup_{r\to 0^+} \fint_{B(x,r)} \vphi(g_u(y))\,d\mu(y) + \limsup_{r\to 0^+} \fint_{B(x,r)} \vphi\biggl(\frac{\rho(y)}{j}\biggr)\,d\mu(y) \\
  &= \vphi(g_u^*(x)) + \vphi\biggl( \frac{\rho(x)}{j} \biggr).
\end{align*}
Letting $j\to\infty$ shows that $\vphi\circ g_2 \le \vphi\circ g_u^*$ on $M$, and hence $g_2 \le  g_u^*$ a.e. on $\Pcal$.

Suppose now that $\vphi$ supports the Minkowski-type inequality. For $x\in M$, we have
\begin{align*}
  g_2(x) & \le \limsup_{r\to 0^+} \vphi^{-1} \biggl(\fint_{B(x,r)} \vphi\biggl(g_u(y) + \frac{\rho(y)}{j}\biggr)\,d\mu(y)\biggr) \\
  & \le \limsup_{r\to 0^+} \vphi^{-1} \biggl(\fint_{B(x,r)} \vphi(g_u(y))\,d\mu(y)\biggr) \\
  & \quad + \limsup_{r\to 0^+} \vphi^{-1}\biggl(\fint_{B(x,r)} \vphi\biggl(\frac{\rho(y)}{j}\biggr)\,d\mu(y)\biggr) \\
  &= g_u^*(x) + \frac{\rho(x)}{j}\,.
\end{align*}
Letting $j\to\infty$ shows that $g_2 \le g_u^*$ on $M$, i.e., almost everywhere on $\Pcal$.
\end{proof}
\begin{exa}
Let us find several examples of functions $\vphi$, which satisfy the hypotheses of the previous theorem.
\begin{enumerate}
	\item The representation formula based on either $\vphi(t) = t/(1+t)$ or $\vphi(t) = \arctan t$, where $t\in [0, \infty)$, may be used for any (quasi)Banach function lattice $X$ as both functions are bounded. Therefore, $\vphi \circ g \in L^\infty(\Pcal) \subset L^1_\loc(\Pcal)$ for any measurable function $g$. These functions $\vphi$ are concave, and hence subadditive.
	\item \label{exa:repre_form_phi_b}
	Given $p\in (0, 1]$, the function $\vphi(t) = t^p$ for $t\in[0, \infty)$ is concave (and hence subadditive), but unbounded, which somewhat restricts the choice of the function space~$X$. For example, let $X=L^q(\Pcal)$. If $q\in[p, \infty]$, then the assumptions are satisfied. On the other hand, if $q\in(0, p)$, then $\vphi \circ g$ in general fails to be locally integrable for some $g\in X$.
  \item Given $p\in [1, \infty)$, the function $\vphi(t) = t^p$, $t\in[0, \infty)$, obeys the Minkowski-type inequality, which in this case is actually just the triangle inequality for the $L^p$ norm. Similarly as in \ref{exa:repre_form_phi_b}, the theorem's hypotheses are not fulfilled in the case of $X=L^q(\Pcal)$ with $q\in(0, p)$.
  \item Mulholland \cite{Mul} has shown that a function $\vphi$ satisfies the Minkowski-type inequality if $\log(\vphi'(e^t))$ is an increasing and concave function for $t\in \Rbb$. This condition can be equivalently written as $\vphi(t) = \int_0^t e^{\psi(\log \tau)}\,d\tau$, where $\psi: \Rbb\to\Rbb$ is increasing and concave. Consequently, one can show that the function $\vphi(t) = t^p (\log (1+t))^q$ satisfies condition \ref{itm:minkowski-type} in the theorem's hypotheses whenever $p,q\in[1,\infty)$.
  \item Matkowski \cite{Mat} has proven that the Minkowski-type inequality in our setting holds if and only if the function $F(s,t) = \vphi(\vphi^{-1}(s) + \vphi^{-1}(t))$, $s,t\ge 0$, is concave on $[0, \infty) \times [0, \infty)$. He also generalized Mulholland's sufficient condition. If $\vphi \in \Ccal^2((0,\infty))$ is strictly convex and the function $\vphi'/\vphi''$ is superadditive in $(0, \infty)$, then $F(s,t)$ is indeed concave, and thus the Minkowski-type inequality holds. Based on this condition, one can prove that the functions $\vphi(t) = t^2 / (t+1)$ and $\vphi(t) = t e^{-1/t}$, $t>0$, may be used for  suitable function spaces $X$.
\end{enumerate}
\end{exa}
%
% ----------------------------------------------------------
% ----------------------------------------------------------
% ------- SECTION 5 - THE SET OF WEAK UPPER GRADIENTS ------
% ----------------------------------------------------------
% ----------------------------------------------------------
%
\section{The set of weak upper gradients}
\label{sec:wugset}
In this section, we will study convergence properties of sequences of $X$-weak upper gradients. A fundamental result has been established by Fuglede~\cite{Fug} in the setting of Lebesgue spaces on $\Rbb^n$. Later it was generalized by Shanmugalingam~\cite{Sha} to Lebesgue spaces on metric measure spaces, and it turns out that the lemma holds true even if we choose quasi-Banach function lattices to be the underlying function spaces.
%
% ----------------------------------------------------------
%
\begin{lem}[Fuglede's lemma]
\label{lem:fuglede}
Assume that $g_k\to g$ in $X$ as $k\to\infty$. Then, there is a subsequence (again denoted by $\{g_k\}_{k=1}^\infty$) such that
\[
  \int_\gamma g_k\,ds \to \int_\gamma g\,ds \quad \mbox{as }k\to\infty
\]
for $\Mod_X$-a.e. curve $\gamma$, while all the integrals are well defined and real-valued. Furthermore, for $\Mod_X$-a.e. curve $\gamma$,
\[
  \int_\gamma |g_k-g|\,ds \to 0 \quad \mbox{as }k\to\infty.
\]
\end{lem}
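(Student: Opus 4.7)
The plan is to extract a sufficiently rapidly convergent subsequence so that, via the Riesz--Fischer property, a single function in $X$ dominates all the pointwise error terms; converting this $X$-bound into a path-integral bound via Proposition~\ref{pro:mod0_equiv} then forces the desired path convergence off a $\Mod_X$-null curve family. Concretely, I would first use the continuity of $\|\cdot\|_X$ to pass to a subsequence (still denoted $\{g_k\}_{k=1}^\infty$) with $\|g_k-g\|_X\le (2c)^{-k}$, where $c\ge 1$ is the modulus of concavity of $X$. Property~\ref{df:qBFL.RF} then yields
\[
  \biggl\|\sum_{k=1}^\infty |g_k-g|\biggr\|_X \le \sum_{k=1}^\infty c^k\|g_k-g\|_X \le \sum_{k=1}^\infty 2^{-k} = 1,
\]
so the pointwise a.e.\ sum $\rho\coloneq\sum_{k=1}^\infty |g_k-g|$ belongs to $X$ and is finite a.e.

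Using Lemma~\ref{lem:mod_ae_equivalence}, I would replace $\rho$, each $|g_k-g|$, $|g|$, and each $|g_k|$ by Borel surrogates equal to them a.e., without changing any path integral outside a $\Mod_X$-null family; denote the union of all these exceptional families by $\Gamma_0$, which remains $\Mod_X$-null by Lemma~\ref{lem:mod_properties}\,\ref{lem:mod_properties:union_0}. Applying Proposition~\ref{pro:mod0_equiv} to the Borel versions of $\rho$ and $|g|$ (both in $X$ by the lattice property) enlarges $\Gamma_0$ by two further $\Mod_X$-null families, outside of which $\int_\gamma \rho\,ds$ and $\int_\gamma |g|\,ds$ are finite.

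For every curve $\gamma\notin\Gamma_0$ the estimate
\[
  \sum_{k=1}^\infty \int_\gamma |g_k-g|\,ds = \int_\gamma \rho\,ds<\infty
\]
forces $\int_\gamma|g_k-g|\,ds\to 0$, which is the second claim. Since $\int_\gamma|g_k|\,ds\le \int_\gamma|g|\,ds+\int_\gamma\rho\,ds<\infty$, each $\int_\gamma g_k\,ds$ is a real number, $\int_\gamma g\,ds$ too, and
\[
  \Bigl|\int_\gamma g_k\,ds-\int_\gamma g\,ds\Bigr|\le \int_\gamma|g_k-g|\,ds\to 0,
\]
yielding the first claim. The main obstacle I expect is purely technical: Proposition~\ref{pro:mod0_equiv} is phrased for Borel functions, while the $g_k$ and $g$ are merely measurable, so Lemma~\ref{lem:mod_ae_equivalence} must be invoked with care to ensure that all a.e.\ modifications leave path integrals unchanged outside a single $\Mod_X$-null family. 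Once that bookkeeping is done, the Riesz--Fischer estimate produces the dominating $\rho$ essentially for free.
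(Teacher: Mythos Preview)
Your proposal is correct and follows essentially the same strategy as the paper: extract a subsequence with $\|g_k-g\|_X\le(2c)^{-k}$, invoke the Riesz--Fischer property to place the series $\sum_k|g_k-g|$ in $X$, and convert this into a path-integral bound via Proposition~\ref{pro:mod0_equiv} (with Lemma~\ref{lem:mod_ae_equivalence} handling the Borel-vs-measurable bookkeeping). The only cosmetic difference is that the paper decomposes the exceptional family into level sets $\widetilde{\Gamma}_j=\{\gamma:\limsup_k\int_\gamma|g_k-g|\,ds>1/j\}$ and bounds $\Mod_X(\widetilde{\Gamma}_j)$ via the tail sums $\rho_{m,j}=j\sum_{k>m}|g_k-g|$, letting $m\to\infty$; your direct use of the full series $\rho=\sum_{k\ge1}|g_k-g|$ together with $\int_\gamma\rho\,ds<\infty$ is a slight streamlining of the same argument.
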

\begin{rem}
A similar claim has been proven in Lemma~\ref{lem:fuglede-type}. The hypothesis of convergence of the sequence $\{g_k\}_{k=1}^\infty$ in $X$ is replaced there by its monotone pointwise convergence. Note that these lemmata are not corollaries of each other, as can be seen from the following examples. 
\begin{enumerate}
  \item Suppose $X=L^\infty([0,1])$ and let $g_k = \chi_{(0, 1/k)}$ for $k\in\Nbb$. Then, $g_k$ decreases to zero function as $k\to\infty$. However, neither this sequence nor any of its subsequences converges in $L^\infty([0,1])$, as it is not a Cauchy sequence, which is a key property that will be used in the proof below.
  \item Suppose $X=L^1([0, 1])$ and let $g_k = k \chi_{(0, 1/k)} / \log k$ for $k>1$. Then, $g_k \to 0$ in $L^1([0, 1])$ as $k\to\infty$. However, the pointwise convergence is not monotone. Moreover, the argument in the proof of Lemma~\ref{lem:fuglede-type} was based on the dominated convergence theorem, which would fail here since no dominating function would be integrable.
\end{enumerate}
\end{rem}
In the following proof, the symbols $g^+$ and $g^-$ are used to denote the positive and the negative part of a function $g$, respectively, i.e.,
\[
  g^+ = \max\{0, g\} \quad \mbox{and}\quad g^- = (-g)^+ = \max\{0, -g\}.
\]
\begin{proof}[Proof of Lemma~\ref{lem:fuglede}]
Passing to a subsequence, we may assume that $\|g_k - g\|_X < (2c)^{-k}$, where $c\ge1$ is the modulus of concavity of $X$. By Lemma \ref{lem:mod_ae_equivalence}, $\int_\gamma g^+\,ds$ is well defined with a value in $[0, \infty]$ for $\Mod_X$-a.e. curve $\gamma$. Furthermore, $\int_\gamma g^+\,ds < \infty$ for $\Mod_X$-a.e. curve $\gamma$ by Proposition~\ref{pro:mod0_equiv}. Similarly, $\int_\gamma g^-\,ds$ is well defined and real-valued for $\Mod_X$-a.e. curve $\gamma$. Consequently, $\int_\gamma g\,ds$ is well defined and real-valued for $\Mod_X$-a.e. curve $\gamma$. Let the family of the exceptional curves be denoted by $\Gamma_\infty$. Arguing similarly for each $k\in\Nbb$, we obtain families $\Gamma_k$ with $\Mod_X(\Gamma_k)=0$, outside of which $\int_\gamma g_k\,ds$ is well defined and real-valued. Let $\Gamma = \Gamma_\infty \cup \bigcup_{k=1}^\infty \Gamma_k$. Then, all the integrals are well defined and real-valued for $\gamma \in \Gamma(\Pcal)\setminus \Gamma$, while $\Mod_X(\Gamma)=0$ by Lemma \ref{lem:mod_properties}\,\ref{lem:mod_properties:union_0}.
  
Let next
\begin{alignat*}{2}
  \widehat{\Gamma} & = \Bigl\{ \gamma \in \Gamma(\Pcal) \setminus \Gamma: \int_\gamma g_k\,ds \not\to \int_\gamma g\,ds, \mbox{ as } k\to \infty\Bigr\}\,,\quad\\
  \widetilde{\Gamma} & = \Bigl\{ \gamma \in \Gamma(\Pcal) \setminus \Gamma: \int_\gamma |g_k-g|\,ds \not\to 0, \mbox{ as } k\to \infty\Bigr\}\,,& \\
  \widetilde{\Gamma}_j & = \Bigl\{ \gamma \in \Gamma(\Pcal) \setminus \Gamma: \limsup_{k\to\infty} \int_\gamma |g_k-g|\,ds > \frac{1}{j}\Bigr\}\,, & j\in\Nbb.
\end{alignat*}
Then, $\widehat{\Gamma} \subset \widetilde{\Gamma} = \bigcup_{j=1}^\infty \widetilde{\Gamma}_j$. Let $\rho_{m,j} = j \sum_{k=m+1}^\infty |g_k - g|$. Then,
\[
  \int_\gamma \rho_{m,j}\,ds > 1 \quad \mbox{for all $\gamma\in\widetilde{\Gamma}_j$ and $m\in\Nbb$}.
\]
Hence,
\[
  \Mod_X(\widetilde{\Gamma}_j) \le \|\rho_{m,j}\|_X \le j \sum_{k=m+1}^\infty c^{k-m} (2c)^{-k} = j (2c)^{-m} \to 0\quad\mbox{as $m\to\infty$,}
\]
which yields $\Mod_X(\Gamma\cup\widehat{\Gamma}) = \Mod_X(\Gamma\cup\widetilde{\Gamma}) = 0$ due to Lemma \ref{lem:mod_properties}\,\ref{lem:mod_properties:union_0}. Finally, for every curve $\gamma \in \Gamma(\Pcal)\setminus(\Gamma \cup \widehat{\Gamma})$ we have $\int_\gamma g_k\,ds\to\int_\gamma g\,ds$ as $k\to\infty$. Moreover, $\int_\gamma |g_k - g|\,ds \to 0$ for every curve $\gamma \in \Gamma(\Pcal)\setminus(\Gamma \cup \widetilde{\Gamma})$ as $k\to\infty$.
\end{proof}
One of the fundamental disadvantages of the set of upper gradients of a given function is that it is not closed under convergence in $X$. The following proposition shows that, on the contrary, the set of $X$-weak upper gradients is closed in $X$.
%
% ---------------------------------------------------------
%
\begin{pro}
\label{pro:X-lim_of_wugs_is_wug}
Let $u\in DX$, and $\{g_k\}_{k=1}^\infty$ be a sequence of $X$-weak upper gradients of $u$ in $X$. Suppose that $g_k \to g$ in $X$ as $k\to\infty$, where $g\in X$ is non-negative. Then, $g$ is an $X$-weak upper gradient of $u$.
\end{pro}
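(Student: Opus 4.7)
The natural approach is to combine Fuglede's lemma (Lemma \ref{lem:fuglede}) with the countable subadditivity of the modulus on zero-modulus families (Lemma \ref{lem:mod_properties}\,\ref{lem:mod_properties:union_0}). Since measurability and non-negativity of $g$ are immediate from the hypotheses, the only thing requiring proof is that the upper gradient inequality \eqref{eq:ug_def} holds along $\Mod_X$-a.e.\ curve.

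First I would apply Lemma \ref{lem:fuglede} to extract a subsequence, still denoted $\{g_k\}_{k=1}^\infty$, such that there exists a curve family $\Gamma'$ with $\Mod_X(\Gamma') = 0$ and $\int_\gamma g_k\,ds \to \int_\gamma g\,ds$ as $k\to\infty$, with all integrals being real-valued and well defined, for every curve $\gamma \in \Gamma(\Pcal)\setminus \Gamma'$. For each $k\in\Nbb$, let $\Gamma_k$ denote the family of curves along which $g_k$ fails to satisfy \eqref{eq:ug_def}; by assumption $\Mod_X(\Gamma_k)=0$. Setting $\Gamma = \Gamma' \cup \bigcup_{k=1}^\infty \Gamma_k$, Lemma \ref{lem:mod_properties}\,\ref{lem:mod_properties:union_0} yields $\Mod_X(\Gamma) = 0$.

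Now fix an arbitrary curve $\gamma \in \Gamma(\Pcal)\setminus\Gamma$. For every $k\in\Nbb$ we have
\[
  |u(\gamma(0)) - u(\gamma(l_\gamma))| \le \int_\gamma g_k\,ds,
\]
and the right-hand side converges to $\int_\gamma g\,ds < \infty$. Passing to the limit yields
\[
  |u(\gamma(0)) - u(\gamma(l_\gamma))| \le \int_\gamma g\,ds,
\]
which in particular forces $|u(\gamma(0))|$ and $|u(\gamma(l_\gamma))|$ not to both be infinite (consistent with the convention used in Definition \ref{df:ug}). Thus $g$ is an $X$-weak upper gradient of $u$.

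I do not expect a genuine obstacle here: the only mild subtlety is bookkeeping the various zero-modulus exceptional families and ensuring that $\int_\gamma g\,ds$ is a priori well defined and finite on the complement of $\Gamma$, but this is exactly what Fuglede's lemma guarantees. The argument is essentially a countable stability statement, pulled together by Lemma \ref{lem:mod_properties}\,\ref{lem:mod_properties:union_0}.
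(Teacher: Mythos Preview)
Your proposal is correct and follows essentially the same route as the paper: apply Fuglede's lemma to obtain a subsequence with good path-integral convergence outside a zero-modulus family, adjoin the exceptional families $\Gamma_k$ where each $g_k$ fails \eqref{eq:ug_def}, and pass to the limit on curves outside the union. The only cosmetic difference is that the paper leaves the passage to a subsequence implicit, and your parenthetical remark about endpoint values being infinite is superfluous (the inequality itself is all that is required).
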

\begin{proof}
Let $\Gamma_k \subset \Gamma(\Pcal)$ be the set of those curves for which $g_k$ does not satisfy \eqref{eq:ug_def}, $k\in\Nbb$. Then, $\Mod_X(\Gamma_k) = 0$ by the definition of $X$-weak upper gradients. Let $\Gamma$ be the family of curves for which $\int_\gamma g_k\,ds \not\to \int_\gamma g\,ds$, or some of the integrals are not well defined. Fuglede's lemma shows that $\Mod_X(\Gamma)=0$. Consider now a curve $\gamma \in \Gamma(\Pcal) \setminus(\Gamma \cup \bigcup_{k=1}^\infty \Gamma_k)$. Then,
\[
  | u(\gamma(0)) - u(\gamma(l_\gamma)) | \le \lim_{k\to\infty} \int_{\gamma} g_k\,ds = \int_\gamma g\,ds.
\]
Lemma~\ref{lem:mod_properties}\,\ref{lem:mod_properties:union_0} yields that $\Mod_X(\Gamma \cup \bigcup_{k=1}^\infty \Gamma_k)=0$, and hence $g$ is an $X$-weak upper gradient of $u$.
\end{proof}
The following proposition further studies the closedness of the set of $X$-weak upper gradients of a given function.
%
% ---------------------------------------------------------
%
\begin{pro}
\label{pro:ug_set_closure}
For $u\in DX$, let $M$ be the set of all upper gradients of $u$ which belong to $X$. Then, the closure $\itoverline{M}$ of $M$ in $X_+$ consists precisely of those $X$-weak upper gradients of $u$, which are in $X$. Moreover, $\itoverline{M} = \{g\in X_+: g\ge g_u\mbox{ a.e.}\}$.
\end{pro}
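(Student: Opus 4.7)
The plan is to derive both equalities by a short chain of inclusions, using the closure result of Proposition~\ref{pro:X-lim_of_wugs_is_wug} together with the approximation result of Lemma~\ref{lem:ug-approx-wug}. Let $W$ denote the set of all $X$-weak upper gradients of $u$ that lie in $X_+$. I would first show $\overline{M} = W$, and then identify $W$ with $\{g \in X_+ : g \ge g_u \text{ a.e.}\}$.

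For $\overline{M} \subseteq W$: since every upper gradient is, in particular, an $X$-weak upper gradient, we have $M \subseteq W$, and $W$ is closed in $X_+$ by Proposition~\ref{pro:X-lim_of_wugs_is_wug}; hence $\overline{M} \subseteq W$. For the reverse inclusion, given $g \in W$, I would first invoke Lemma~\ref{lem:mod_ae_equivalence} to select a Borel function $\tilde{g}$ with $\tilde{g} = g$ a.e.; then $\tilde{g} \in W$ while $\|\tilde{g} - g\|_X = 0$. Applying Lemma~\ref{lem:ug-approx-wug} to $\tilde{g}$ yields $\rho \in X$ such that $g_k \coloneq \tilde{g} + \rho/k$ is an upper gradient of $u$ for every $k \in \Nbb$, hence $g_k \in M$. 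The quasi-triangle inequality gives $\|g_k - g\|_X \le c(\|\rho\|_X/k + 0) \to 0$ as $k \to \infty$, so $g \in \overline{M}$.

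For the second identification, Theorem~\ref{thm:min_wug_exists} guarantees that every $g \in W$ satisfies $g \ge g_u$ a.e., giving $W \subseteq \{g \in X_+ : g \ge g_u \text{ a.e.}\}$. Conversely, suppose $g \in X_+$ and $g \ge g_u$ a.e. I would consider $h \coloneq \max\{g, g_u\}$, which equals $g$ almost everywhere and satisfies $h \ge g_u$ pointwise everywhere. By Lemma~\ref{lem:mod_ae_equivalence}, $\int_\gamma g\,ds = \int_\gamma h\,ds \ge \int_\gamma g_u\,ds$ for $\Mod_X$-a.e. curve $\gamma$, and chaining this with the $X$-weak upper gradient inequality for $g_u$ exhibits $g$ as an element of $W$, hence of $\overline{M}$.

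The proof has no serious obstacle; the only subtle point is the discrepancy between the Borel measurability required of upper gradients and the merely measurable functions permitted as $X$-weak upper gradients, which is precisely what Lemma~\ref{lem:mod_ae_equivalence} is designed to bridge in the approximation step.
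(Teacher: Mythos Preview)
Your proof is correct and follows essentially the same approach as the paper: both directions of $\overline{M}=W$ are obtained from Lemma~\ref{lem:ug-approx-wug} and Proposition~\ref{pro:X-lim_of_wugs_is_wug}, and the identification with $\{g\in X_+: g\ge g_u\ \text{a.e.}\}$ uses Theorem~\ref{thm:min_wug_exists} together with Lemma~\ref{lem:mod_ae_equivalence}. The only cosmetic differences are that the paper uses $\min\{g,g_u\}$ rather than your $\max\{g,g_u\}$ in the final step, and that it applies Lemma~\ref{lem:ug-approx-wug} directly without your preliminary Borel regularization of $g$ (which is harmless extra care on your part).
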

In the proposition, we use the symbol $X_+$, which denotes the convex cone of non-negative functions (not equivalence classes) in $X$, equipped with the (quasi)semi\-norm inherited from $X$.
\begin{proof}
Let $g\in X$ be an $X$-weak upper gradient of $u$. Then, $g\in \itoverline{M}$ by Lemma \ref{lem:ug-approx-wug}.

Conversely, let $g\in \itoverline{M}$. Then, there exists a sequence $\{g_j\}_{j=1}^\infty \subset X_+$ of upper gradients of $u$ such that $g_j \to g$ in $X$ as $j\to \infty$. Proposition \ref{pro:X-lim_of_wugs_is_wug} shows that $g$ is an $X$-weak upper gradient of $u$.

Let $M' = \{g\in X_+: g\ge g_u\mbox{ a.e.}\}$. Then, $\itoverline{M} \subset M'$ since $g_u$ is minimal a.e. among all $X$-weak upper gradients $g\in X_+$ by Theorem \ref{thm:min_wug_exists}. On the other hand, let $g\in M'$. Then, $g \ge \min\{g, g_u\}$ everywhere in $\Pcal$ while $\min\{g, g_u\} = g_u$ a.e. in $\Pcal$. The function $\min\{g, g_u\}$ is an $X$-weak upper gradient of $u$ by Lemma \ref{lem:mod_ae_equivalence}, and hence so is $g$.
\end{proof}
Fuglede's lemma (Lemma \ref{lem:fuglede}) has an interesting consequence about convergence of sequences of Newtonian functions.
%
% ---------------------------------------------------------
%
\begin{pro}
\label{pro:XxX-vs-N1X_convergence}
Let $f_k\in \NX$ and suppose that $g_k\in X$ is an $X$-weak upper gradient of $f_k$, $k\in\Nbb$. Assume further that $f_k\to f$ in $X$ and $g_k \to g$ in $X$ as $k\to\infty$, where $g$ is non-negative. Then, there is a function $\tilde{f} = f$ a.e. such that $g$ is an $X$-weak upper gradient of $\tilde{f}$, and thus $\tilde{f} \in \NX$. Furthermore, there is a subsequence $\{f_{k_j}\}_{j=1}^\infty$ such that $f_{k_j} \to \tilde{f}$ q.e. as $k\to\infty$.

If either $f\in\NX$ or there is a subsequence $\{f_{k_j}\}_{j=1}^\infty$ such that $f_{k_j} \to f$ q.e. as $k\to\infty$, then we may choose $\tilde{f} = f$.
\end{pro}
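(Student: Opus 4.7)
The plan is to combine a Fuglede-type subsequence extraction with a curve-wise Arzel\`a--Ascoli argument to construct $\tilde f$ as a pointwise subsequential limit and then verify that $g$ is its weak upper gradient. First I would pass to a subsequence (still denoted $f_{n_k}, g_{n_k}$) so that $f_{n_k}\to f$ a.e.\ by Lemma~\ref{lem:conv_X-vs-pointwise} and so that Fuglede's lemma provides $\int_\gamma g_{n_k}\,ds\to\int_\gamma g\,ds$ and $\int_\gamma|g_{n_k}-g|\,ds\to 0$ for $\Mod_X$-a.e.\ curve $\gamma$. Setting $E=\{x:f_{n_k}(x)\not\to f(x)\}$ yields $\mu(E)=0$, so Proposition~\ref{pro:mod0_equiv} applied with $\rho=\infty\chi_E$ gives $\Mod_X(\Gamma_E^+)=0$. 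I would gather all these null families together with the families where $g_{n_k}$ fails to be an upper gradient of $f_{n_k}$ along $\gamma$ (Lemma~\ref{lem:not_ug_along_curves}) into a single $\Mod_X$-null exceptional family $\Gamma^*$ via Lemma~\ref{lem:mod_properties}\,\ref{lem:mod_properties:union_0}.

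The main obstacle is to promote the $\lambda^1$-a.e.\ convergence of $f_{n_k}\circ\gamma$ to uniform convergence on each $\gamma\notin\Gamma^*$. For such $\gamma$, the sequence $\{g_{n_k}\circ\gamma\}$ converges in $L^1([0,l_\gamma])$ and is therefore uniformly integrable; combined with the bound $|(f_{n_k}\circ\gamma)'|\le g_{n_k}\circ\gamma$ from Lemma~\ref{lem:ACC-derivative-vs-ug}, this makes $\{f_{n_k}\circ\gamma\}$ equicontinuous, and fixing a reference $t_0\in[0,l_\gamma]\setminus\gamma^{-1}(E)$ at which $\{f_{n_k}(\gamma(t_0))\}$ converges (and is therefore bounded) yields uniform boundedness as well. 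By Arzel\`a--Ascoli every subsequence admits a uniformly convergent sub-subsequence; any such limit is continuous and must agree with $f\circ\gamma$ on the $\lambda^1$-dense set $[0,l_\gamma]\setminus\gamma^{-1}(E)$, so it coincides with the unique continuous extension $F_\gamma$ of $f\circ\gamma$ from that dense set. Since all subsequential limits agree, the full sequence $f_{n_k}\circ\gamma$ converges uniformly to $F_\gamma$, and passing to the limit in the upper gradient inequality produces $|F_\gamma(0)-F_\gamma(l_\gamma)|\le\int_\gamma g\,ds$.

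I would then define $\tilde f(x)=\lim_k f_{n_k}(x)$ when this limit exists in $\Rbb$ and $\tilde f(x)=0$ otherwise; since $\tilde f=f$ a.e.\ and, for every $\gamma\notin\Gamma^*$ and every $t\in[0,l_\gamma]$, the uniform convergence gives $\tilde f(\gamma(t))=F_\gamma(t)$, the upper gradient inequality $|\tilde f(\gamma(0))-\tilde f(\gamma(l_\gamma))|\le\int_\gamma g\,ds$ holds, proving $g$ is an $X$-weak upper gradient of $\tilde f$ and $\tilde f\in\NX$. The set $\tilde E=\{x:f_{n_k}(x)\not\to\tilde f(x)\}$ is disjoint from every $\gamma\notin\Gamma^*$, so $\Gamma_{\tilde E}\subset\Gamma^*$ has zero $X$-modulus; combined with $\mu(\tilde E)=0$, this lets me take $\chi_{\tilde E}$ as a candidate in the capacity infimum with upper gradients $\rho_\eps$ of arbitrarily small $X$-norm supplied by the very definition of $\Mod_X(\Gamma_{\tilde E})=0$, yielding $C_X(\tilde E)=0$ and hence $f_{n_k}\to\tilde f$ q.e.

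For the final assertions, if $f\in\NX$ then $\tilde f,f\in\NX$ coincide a.e., so Proposition~\ref{pro:NX-ineq.ae-ineq.qe} upgrades this to q.e.\ equality, after which Lemma~\ref{lem:equal_qe_same_wug} permits the choice $\tilde f=f$. If instead only a subsequence $\{f_{k_j}\}$ is assumed to converge q.e.\ to $f$, applying the construction above to that subsequence produces some $\tilde f'\in\NX$ equal to $f$ a.e.\ with a sub-subsequence $f_{k_{j_m}}\to\tilde f'$ q.e.; comparing with the hypothesis $f_{k_{j_m}}\to f$ q.e.\ forces $\tilde f'=f$ q.e., and since $\tilde f=\tilde f'$ a.e.\ with both in $\NX$, a further application of Proposition~\ref{pro:NX-ineq.ae-ineq.qe} gives $\tilde f=\tilde f'=f$ q.e., again permitting $\tilde f=f$.
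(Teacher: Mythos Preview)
Your argument is correct, but it follows a route quite different from the paper's. The paper simply sets $\tilde f=\limsup_{k}f_k$ and $\hat f=\liminf_k f_k$ pointwise, verifies the weak upper gradient inequality for each via the endpoint dichotomy of Proposition~\ref{pro:alt-def-wug} (either both endpoints are infinite, or the inequality passes to the limit using Fuglede's lemma), and then obtains q.e.\ convergence from $\hat f=\tilde f$ a.e.\ together with Proposition~\ref{pro:NX-ineq.ae-ineq.qe}. No compactness argument on curves is used.

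Your approach instead upgrades the $\lambda^1$-a.e.\ convergence of $f_{n_k}\circ\gamma$ to uniform convergence on $\Mod_X$-a.e.\ curve via equicontinuity and Arzel\`a--Ascoli, then reads off the upper gradient inequality and proves $C_X(\tilde E)=0$ directly by testing with $\chi_{\tilde E}$ and upper gradients drawn from $\Mod_X(\Gamma_{\tilde E})=0$. This is longer and invokes heavier machinery, but it buys a genuinely stronger intermediate conclusion (uniform convergence along almost every curve) and a self-contained capacity estimate that does not appeal to Proposition~\ref{pro:NX-ineq.ae-ineq.qe}. Two small points you should make explicit: first, add the null families $\Gamma^+_{\{|f_{n_k}|=\infty\}}$ to $\Gamma^*$ so that each $f_{n_k}\circ\gamma$ is real-valued (and hence absolutely continuous) before invoking Arzel\`a--Ascoli; second, replace $g$ and $g_{n_k}$ by Borel representatives (Lemma~\ref{lem:mod_ae_equivalence}) so that the $L^1([0,l_\gamma])$-convergence of $g_{n_k}\circ\gamma$ is well posed. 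In the final case, it is simpler---as the paper does---to start the whole construction from the given q.e.-convergent subsequence, which yields $\tilde f=f$ q.e.\ immediately and avoids the detour through a second function $\tilde f'$.
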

%
% ---------------------------------------------------------
%
\begin{rem}
Observe that we will not prove that $f_k \to \tilde{f}$ in $\NX$ as $k\to\infty$. Such a conclusion need not be true, as can be seen from Example~\ref{exa:XxX-vs-N1X}.
\end{rem}
%
% ---------------------------------------------------------
%
\begin{proof}[Proof of Proposition \ref{pro:XxX-vs-N1X_convergence}]
We may assume that $f_k \to f$ a.e., passing to a subsequence if necessary. If the assumption on existence of a subsequence converging q.e. is fulfilled, then that one is the subsequence we pass to. Otherwise, we use Lemma \ref{lem:conv_X-vs-pointwise} to justify this step. By Fuglede's lemma there is a family of curves $\Gamma$ with $\Mod_X(\Gamma)=0$ such that $\int_\gamma g_k\,ds \to \int_\gamma g\,ds \in \Rbb$ as $k\to\infty$ whenever $\gamma\in \Gamma(\Pcal)\setminus \Gamma$. Let us define $\tilde{f}$ pointwise everywhere in $\Pcal$ by $\tilde{f} = \limsup_{k\to\infty} f_k$. Then, $\tilde{f} = f$ a.e. in $\Pcal$. 

Lemmata \ref{lem:mod_properties} and \ref{lem:not_ug_along_curves} show that $g_k$ is an upper gradient of $f_k$ for all $k\in\Nbb$ along $\Mod_X$-a.e. curve $\gamma$, while neither $\gamma$, nor any of its subcurves belong to $\Gamma$. Let us now consider one such curve $\gamma: [0,l_\gamma]\to\Pcal$. Then, either $|\tilde{f}(\gamma(0))| = |\tilde{f}(\gamma(l_\gamma))| = \infty$, or 
\begin{align*}
  |\tilde{f}(\gamma(0)) - \tilde{f}(\gamma(l_\gamma))| & \le \limsup_{k\to\infty} |f_k(\gamma(0)) - f_k(\gamma(l_\gamma))| \\
  & \le \limsup_{k\to\infty} \int_\gamma g_k\,ds = \int_\gamma g\,ds.
\end{align*}
As $\tilde{f}$ is finite a.e., Proposition \ref{pro:alt-def-wug} shows that $g$ is an $X$-weak upper gradient of $\tilde{f}$.

Let now $\hat{f} = \liminf_{k\to\infty} f_k$. Then, $\hat{f}=f=\tilde{f}$ a.e. in $\Pcal$. An analogous argument as above yields that $g$ is an $X$-weak upper gradient of $\hat{f}\in \NX$, as well. By Proposition~\ref{pro:NX-ineq.ae-ineq.qe}, we obtain that $\hat{f} = \tilde{f}$ q.e., and hence $f_k \to \tilde{f}$ q.e. as $k\to \infty$.

If $f\in \NX$, then $f=\tilde{f}$ q.e. by Proposition \ref{pro:NX-ineq.ae-ineq.qe}, and $g$ is an $X$-weak upper gradient of $f$ by Lemma \ref{lem:equal_qe_same_wug}. Moreover, $f_k\to f$ q.e. as $k\to \infty$.

Finally, if $f_{k_j} \to f$ q.e. as $j\to\infty$, then again $f=\tilde{f}$ q.e., and $g$ is an $X$-weak upper gradient of $f$ by Lemma \ref{lem:equal_qe_same_wug}.
\end{proof}
The following lemma provides us with an explicit description of the minimal $X$\nobreakdash-weak upper gradient of a locally Lipschitz function defined on an interval on the real line endowed with the Lebesgue measure, given that all functions in $X$ are locally integrable. We will use the formula in Example~\ref{exa:XxX-vs-N1X} below. We also see in the proof that in such a setting, all $X$-weak upper gradients of an arbitrary measurable function are actually its upper gradients.
%
% ---------------------------------------------------------
%
\begin{lem}
\label{lem:Lip-min_wug}
Assume that $X \subset L^1_\loc(I)$, where $I \subset \Rbb$ is an interval. Let $u\in DX$ be a locally Lipschitz function. Then, the \emph{(lower) pointwise dilation}
\[
  \lip u(x) = \liminf_{r\to 0} \sup_{y\in B(x,r)\cap I} \frac{|u(y) - u(x)|}{r}\,, \quad x\in I,
\]
is a minimal $X$-weak upper gradient of $u$. Moreover, it is an upper gradient of $u$.
\end{lem}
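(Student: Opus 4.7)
The plan is to establish three claims in order: (i) $\lip u$ is Borel, (ii) $\lip u$ is an upper gradient of $u$, and then (iii) $\lip u$ coincides with $g_u$ almost everywhere, which together with (ii) gives the minimal $X$-weak upper gradient property. The workhorse is the classical calculus fact that a locally Lipschitz function on an interval is a.e.~differentiable and that $\lip u(x) = |u'(x)|$ at every point of differentiability. Borel measurability of $\lip u$ is routine once one observes that, for each fixed $r > 0$, the function $f_r(x) = \sup_{y \in B(x,r) \cap I}|u(y)-u(x)|/r$ is lower semicontinuous (its superlevel sets are open by continuity of $u$), so $\lip u$ is built from the $f_r$ by countable sup/inf combinations.

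The substantive step is (ii). Given a curve $\gamma: [0, l_\gamma] \to I$ parameterized by arc length, $\gamma$ is $1$-Lipschitz with $|\gamma'|=1$ a.e., and $u\circ\gamma$ is itself locally Lipschitz, hence absolutely continuous. The hard part is justifying the chain rule $(u\circ\gamma)'(t) = u'(\gamma(t))\gamma'(t)$ for a.e.~$t$, since $\gamma$ may be non-injective. This reduces to showing that $\gamma^{-1}(N)$ is Lebesgue-null whenever $N \subset I$ is, which follows from the Banach indicatrix identity $\int_0^{l_\gamma} \chi_N(\gamma(t))\,dt = \int_I \chi_N(x) N_\gamma(x)\,dx = 0$, using $|\gamma'|=1$ a.e.~and the a.e.~finiteness of the multiplicity $N_\gamma$. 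Once the chain rule is available, the fundamental theorem of calculus gives
\[
|u(\gamma(l_\gamma)) - u(\gamma(0))| \le \int_0^{l_\gamma}|u'(\gamma(t))|\,dt = \int_\gamma \lip u\,ds,
\]
which is the desired upper gradient inequality.

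For (iii) I first verify $\lip u \in X$. Pick any upper gradient $g_0 \in X$ of $u$, which exists since $u \in DX$, and apply Lemma~\ref{lem:ACC-derivative-vs-ug} to the family of translated segments $\gamma_s(t) = s + t$. Because $X \subset L^1_\loc(I)$, a Fubini argument combined with Proposition~\ref{pro:mod0_equiv} shows that the set of parameters $s$ for which $\gamma_s$ lies in the $\Mod_X$-exceptional family is Lebesgue-null; hence $|u'(s+t)| \le g_0(s+t)$ for a.e.~$(s,t)$, and Fubini yields $|u'| \le g_0$ a.e.~on $I$. The lattice property then forces $\lip u = |u'| \le g_0$ to belong to $X$. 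Theorem~\ref{thm:min_wug_exists} applied to the $X$-weak upper gradient $\lip u$ gives $g_u \le \lip u$ a.e., while Lemma~\ref{lem:ACC-derivative-vs-ug} applied to $g_u$ yields $\lip u = |u'| \le g_u$ a.e. Therefore $\lip u = g_u$ a.e., so $\lip u$ is a minimal $X$-weak upper gradient. The main technical obstacle throughout is the chain-rule step in (ii); everything else is bookkeeping once $|u'| = \lip u$ a.e.~is coupled with Lemma~\ref{lem:ACC-derivative-vs-ug} and the lattice property.
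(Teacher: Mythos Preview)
Your argument is correct, and its backbone---identify $\lip u$ with $|u'|$ a.e.\ via Rademacher/Lebesgue, then compare with an arbitrary $X$-weak upper gradient through Lemma~\ref{lem:ACC-derivative-vs-ug}---is the same as the paper's. There are, however, two places where the paper's execution is tighter and worth knowing.

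First, for the upper gradient property the paper simply cites Cheeger \cite[Proposition~1.11]{Che} rather than running the Banach indicatrix / chain-rule argument you give. Your direct proof is fine in this one-dimensional situation, and it is more self-contained, but it is also the step you flag as the ``main technical obstacle''; the paper sidesteps it entirely.

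Second, and more interesting, is how one passes from the curve-wise conclusion of Lemma~\ref{lem:ACC-derivative-vs-ug} (which holds only for $\Mod_X$-a.e.\ curve) to the pointwise statement $|u'|\le g$ a.e.\ on $I$. You do this by a Fubini argument over translated segments $\gamma_s$, showing that the set of exceptional parameters $s$ is Lebesgue-null. The paper instead observes that \emph{every} fixed segment $\gamma_{a,b}(t)=a+t$ has positive $X$-modulus: since $X\subset L^1_\loc(I)$, any $\rho\in X$ satisfies $\int_{\gamma_{a,b}}\rho\,ds<\infty$, so by Proposition~\ref{pro:mod0_equiv} the singleton $\{\gamma_{a,b}\}$ cannot have modulus zero. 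Hence no segment lies in the exceptional family, and Lemma~\ref{lem:ACC-derivative-vs-ug} applies to $\gamma_{a,b}$ directly, giving $|u'|\le g$ a.e.\ on $[a,b]$ for every $[a,b]\subset I$ and every $X$-weak upper gradient $g\in X$. This simultaneously yields $\lip u\in X$ and minimality in one stroke, whereas your route goes through Theorem~\ref{thm:min_wug_exists} to produce $g_u$, compares it with $\lip u$ from both sides, and effectively invokes the transfer argument twice (once for $g_0$, once for $g_u$). Both approaches are valid; the singleton-modulus trick is shorter and gives the slightly stronger intermediate statement that every $X$-weak upper gradient is in fact an upper gradient along every segment.
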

\begin{proof}
Cheeger \cite[Proposition 1.11]{Che} has proven that $\lip u$ is an upper gradient of $u$.
A classical result of Lebesgue yields that $u$ is differentiable a.e. in $I$ (cf. the Rademacher theorem in Ziemer~\cite[Theorem 2.2.1]{Zie}). Moreover, $|u'(x)| = \lip u(x)$ for a.e. $x\in I$.

Let $[a,b] \subset I$ be a bounded interval and let $\gamma_{a,b}(t) = a + t$ for $t \in [0, b-a]$. Then, the singleton curve family $\Gamma = \{\gamma_{a,b}\}$ has a positive $L^1$-modulus, and hence $\Mod_X(\Gamma)>0$ by Proposition~\ref{pro:mod0_equiv}. Let $g\in X$ be an arbitrary $X$-weak upper gradient of $u$. By Lemma~\ref{lem:ACC-derivative-vs-ug}, we have
\[
  |(u\circ \gamma_{a,b})'(t)| \le g(\gamma_{a,b}(t)) \quad\mbox{ for a.e. }t\in[0, b-a],
\]
which yields that $|u'| \le g$ a.e. on $[a,b]$. Consequently, $\lip u = |u'| \le g$ a.e. on $I$ whence $\lip u$ is a minimal $X$-weak upper gradient of $u$.
\end{proof}

%
% ---------------------------------------------------------
%
\begin{exa}
\label{exa:XxX-vs-N1X}
Suppose that $X\subset L^1([-1,1])$ contains non-zero constant functions. Consider the triangle wave functions $f_k(x) = \arccos(\cos kx)/k$ with upper gradients $g_k \equiv 1$, where $x\in [-1, 1]$ and $k\in\Nbb$. Then, $f_k \to f\equiv 0$ everywhere in $[-1,1]$, while $g_k \to g\equiv 1$ both in $X$ and pointwise everywhere as $k\to\infty$. Obviously, $f \in \NX$. However, $f_k \not\to f$ in $\NX$ as $k\to\infty$. Indeed, by Lemma \ref{lem:Lip-min_wug}, $g_k$ is a minimal $X$-weak upper gradient of $f_k$, and hence
\[
  \|f-f_k\|_\NX = \|f_k\|_\NX = \|f_k\|_X + \|g_k\|_X \ge \|g\|_X>0\quad\mbox{for all $k\in\Nbb$.}
\]
\end{exa}
The following proposition resembles Proposition \ref{pro:XxX-vs-N1X_convergence}; however, we relax the assumption that $f_k \in \NX$, but the convergence of $f_k$ needs to be stronger, namely, pointwise quasi-everywhere.
%
% ---------------------------------------------------------
%
\begin{pro}
\label{pro:qe+X-vs-N1X_convergence}
Let $f_k\in DX$ and suppose that $g_k\in X$ is an $X$-weak upper gradient of $f_k$, $k\in\Nbb$. Assume further that $f_k\to f$ q.e. and $g_k\to g$ in $X$ as $k\to\infty$, where $f$ is real-valued almost everywhere while $g$ is non-negative. Then, $g$ is an $X$-weak upper gradient of $f$.
\end{pro}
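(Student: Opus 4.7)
The plan is to apply Proposition~\ref{pro:alt-def-wug} to $f$ and $g$. The task reduces to verifying, for $\Mod_X$-a.e.\ curve $\gamma:[0,l_\gamma]\to\Pcal$, either that both endpoint values of $f$ are infinite or that the upper gradient inequality holds in the limit.

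First, I would extract a subsequence (still denoted $\{g_k\}$) via Fuglede's lemma (Lemma~\ref{lem:fuglede}) so that $\int_\gamma g_k\,ds \to \int_\gamma g\,ds$ as a finite real limit for $\Mod_X$-a.e.\ curve $\gamma$, with all the integrals well defined. Next, I would set $A=\{x\in\Pcal: f_k(x)\not\to f(x)\}$, which by assumption has $C_X(A)=0$. The crucial ingredient is the standard fact (established in Mal\'{y}~\cite{Mal}) that sets of $X$-capacity zero are encountered only by a family of curves of zero $X$-modulus, i.e.\ $\Mod_X(\Gamma_A)=0$. By Lemma~\ref{lem:mod_properties}\,\ref{lem:mod_properties:union_0}, this can be combined with the Fuglede-exceptional family and the $\Mod_X$-null families from Lemma~\ref{lem:not_ug_along_curves} (along which some $g_k$ fails to be an upper gradient of $f_k$) into a single $\Mod_X$-null exceptional family.

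Fix now a curve $\gamma$ outside this exceptional family. Then both endpoints $\gamma(0), \gamma(l_\gamma)$ lie outside $A$, so $f_k(\gamma(0))\to f(\gamma(0))$ and $f_k(\gamma(l_\gamma))\to f(\gamma(l_\gamma))$ as elements of $\overline{\Rbb}$; moreover $g_k$ is an upper gradient of $f_k$ along $\gamma$ for every $k$, and $\int_\gamma g_k\,ds\to\int_\gamma g\,ds\in\Rbb$. If $|f(\gamma(0))|=|f(\gamma(l_\gamma))|=\infty$, the first alternative in Proposition~\ref{pro:alt-def-wug} is satisfied. Otherwise, at least one endpoint value is finite; but if the other were infinite, then $|f_k(\gamma(0))-f_k(\gamma(l_\gamma))|\to\infty$, whereas the inequality $|f_k(\gamma(0))-f_k(\gamma(l_\gamma))|\le \int_\gamma g_k\,ds$ forces this quantity to stay bounded by the finite limit $\int_\gamma g\,ds$, a contradiction. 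Thus both endpoint values are in fact finite, and passing to the limit in the pointwise upper gradient inequality gives
\[
  |f(\gamma(0))-f(\gamma(l_\gamma))| \le \int_\gamma g\,ds.
\]

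Since $f$ is finite a.e.\ by hypothesis and the above dichotomy holds on $\Mod_X$-a.e.\ curve, Proposition~\ref{pro:alt-def-wug} concludes that $g$ is an $X$-weak upper gradient of $f$. The main obstacle is the capacity-to-modulus implication $C_X(A)=0 \Rightarrow \Mod_X(\Gamma_A)=0$: without it, one cannot transfer q.e.\ convergence of $\{f_k\}$ into endpoint convergence along $\Mod_X$-a.e.\ curve, and the whole argument collapses. Once that implication is in hand, the remaining work is the careful bookkeeping of $\Mod_X$-null exceptional families above and the case analysis on whether endpoint values of $f$ are finite.
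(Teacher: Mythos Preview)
Your argument is correct. The only difference from the paper's proof is organizational: the paper introduces the auxiliary function $\tilde f=\limsup_{k\to\infty} f_k$, shows (exactly as in the proof of Proposition~\ref{pro:XxX-vs-N1X_convergence}) that $g$ is an $X$-weak upper gradient of $\tilde f$ via Proposition~\ref{pro:alt-def-wug}, and then transfers the conclusion to $f$ using $\tilde f=f$ q.e.\ together with Lemma~\ref{lem:equal_qe_same_wug}. You instead work with $f$ directly by invoking the implication $C_X(A)=0\Rightarrow\Mod_X(\Gamma_A)=0$ to force endpoint convergence along $\Mod_X$-a.e.\ curve. These are really the same argument in disguise, since the proof of Lemma~\ref{lem:equal_qe_same_wug} in~\cite{Mal} rests precisely on that capacity-to-modulus implication; the paper's route just hides it behind a black box, whereas yours unfolds it explicitly and handles the endpoint case analysis by hand.
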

Observe that the assumption that $f$ is real-valued a.e. is crucial for the claim. For example, if we let $f_k \equiv k$ with $g_k \equiv 0$ for all $k\ge 1$, then $g \equiv 0$ is not an $X$-weak upper gradient of $f \equiv \infty$, given that the space $\NX$ is a proper subspace of $X$.
\begin{proof}
By passing to a subsequence if necessary we may assume by Fuglede's lemma that $\int_\gamma g_k \, ds \to \int_\gamma g\,ds\in\Rbb$ as $k\to\infty$ whenever $\gamma\in\Gamma(\Pcal)\setminus \Gamma$, where $\Mod_X(\Gamma)=0$. Let $\tilde{f} = \limsup_{k\to\infty} f_k$ and $E=\{x\in\Pcal: |\tilde{f}(x)| = \infty\}$. Then, $\tilde{f}=f$ q.e.

In the same way as in the proof of Proposition~\ref{pro:XxX-vs-N1X_convergence}, we can use Lemmata \ref{lem:mod_properties} and~\ref{lem:not_ug_along_curves} with Proposition \ref{pro:alt-def-wug} to show that $g$ is an $X$-weak upper gradient of $\tilde{f}$.

Finally, $g$ is an $X$-weak upper gradient of $f$ by Lemma \ref{lem:equal_qe_same_wug}.
\end{proof}
%
% ---------------------------------------------------------
%
\begin{rem}
In Example~\ref{exa:XxX-vs-N1X}, the limit function $g$ was an upper gradient of $f$, however it was not its minimal $X$-weak upper gradient. Marola has therefore posed a question, see Bj\"{o}rn and Bj\"{o}rn \cite[Open problem 2.13]{BjoBjo}, whether it is sufficient to assume that $f_k \to f$ in $L^p(\Pcal)$ and $g_k \to g$ in $L^p(\Pcal)$ as $k\to\infty$, where $p\in[1, \infty)$ and where $g_k$ and $g$ are minimal $L^p$-weak upper gradients of $f_k$ and $f$, respectively, for all $k$, in order to obtain that $f_k\to f$ in $N^{1,p}(\Pcal)\coloneq N^1\!L^p(\Pcal)$ as $k\to\infty$. If we study the same question in the setting of quasi-Banach function lattices, then such hypotheses certainly do not suffice in the following cases:
\begin{enumerate}
  \item The (quasi)norm of $X$ is not absolutely continuous, i.e., there exist a function $u\in X$ and a decreasing sequence of sets $E_n \to N$, where $\mu(N)=0$, such that $\|u \chi_{E_n}\|_X \not\to 0 = \| u \chi_N\|_X$ as $n\to\infty$. Typical examples of such spaces are $L^\infty$, the weak $L^p$ spaces, and the Marcinkiewicz spaces. Example  \ref{exa:conv_XxX_vs_N1X_a} below shows what kind of problems may arise in this setting.
  \item The space $\Pcal$ has infinite measure and the (quasi)norm of $X$ measures only the size of the peaks of a function whereas the ``rate of decay at infinity'' does not affect the value of the norm, e.g., $X=L^\infty + Z$, where $Z$ is an arbitrary (quasi)Banach function lattice, i.e., $\|u\|_X = \inf\{ \|v\|_{L^\infty} + \|w\|_Z: u=v+w\}$. In fact, these spaces may have an absolutely continuous norm. Example~\ref{exa:conv_XxX_vs_N1X_b} illustrates the situation for these function spaces.
\end{enumerate}
\end{rem}
%
% ---------------------------------------------------------
%
\begin{exa}
\label{exa:conv_XxX_vs_N1X_a}
Let $X=L^\infty([0, 1])$. For $k\in\Nbb$, define 
\[
  u_k(x) = 
  \begin{cases} 
    \displaystyle \frac{2}{k} - x & \displaystyle \mbox{ for } 0\le x < \frac{1}{k}\,, \\[2ex]
    x & \displaystyle \mbox{ for } \frac{1}{k} \le x \le 1 \,.    
  \end{cases}
\]
Then, $u_k \to u$ in $L^\infty([0, 1])$ as $k\to\infty$, where $u(x) = x$. All functions $u_k$ as well as $u$ are $1$-Lipschitz with constant pointwise dilation, hence $g_{u_k}(x)=g_u (x) = 1$ for a.e. $x\in[0, 1]$. Thus, $g_{u_k} \to g_u$ in $L^\infty([0, 1])$ as $k\to\infty$. On the other hand, $u_k(x)-u(x) = 2(1/k-x)^+$, and hence $g_{u_k-u}(x) = 2 \chi_{[0,1/k)}(x)$, $x\in[0, 1]$. Therefore, $\|u_k - u\|_{N^{1,\infty}([0,1])} = 2/k + 2 \not\to 0$ as $k\to\infty$.
\end{exa}
%
% ---------------------------------------------------------
%
\begin{exa}
\label{exa:conv_XxX_vs_N1X_b}
Let $X=L^1+L^\infty(\Rbb)$, where the function norm can be expressed by
\[
  \| f \|_{L^1 + L^\infty (\Rbb)} = \sup \Bigl\{\int_E |f|: E\subseteq \Rbb\mbox{ and } \lambda^1(E) \le 1\Bigr\} \quad \mbox{for $f\in\Mcal(\Rbb, \lambda^1)$}
\]
(see Bennett and Sharpley \cite[Theorem II.6.4 and Proposition~II.3.3]{BenSha}).

Let $\vphi$ be the $2$-periodic extension of the function $x\mapsto 1-|x-1|$, where $x\in [0, 2)$. For $k\in\Nbb$, define
\[
  u_k(x) = \begin{cases}
     2^{-j} \vphi( 2^j x) & \mbox{for } j-1 \le |x| < j,\mbox{ where } j\in\Nbb, j\neq k, \\
    -2^{-k} \vphi( 2^k x) & \mbox{for } k-1 \le |x| < k.
    \end{cases}
\]
Then, $u_k \to u$ in $L^\infty(\Rbb)$, and hence in $X$, as $k\to\infty$, where $u(x) = 2^{-j} \vphi( 2^j x)$ for $j-1 \le |x| < j$, $j\in\Nbb$. We might also observe that $u = |u_k|$ for any $k\in\Nbb$. All functions $u_k$ as well as $u$ are $1$-Lipschitz with constant pointwise dilation, hence $g_{u_k}(x)=g_u (x) = 1$ for a.e. $x\in\Rbb$. Thus, $g_{u_k} \to g_u$ in $X$ as $k\to\infty$. For $x\in\Rbb$, we have
\[
  u(x)-u_k(x) = 2^{1-k} \vphi(2^k x) \chi_{[k-1, k)}(|x|),
\]
whence $g_{u-u_k}(x) = 2 \chi_{[k-1, k)}(|x|)$. Thus, $\|u - u_k\|_{\NX} \ge \|g_{u-u_k} \|_X = 2 \not\to 0$ as $k\to\infty$.
\end{exa}
%
% ----------------------------------------------------------
% ----------------------------------------------------------
% ---------------------- BIBLIOGRAPHY ----------------------
% ----------------------------------------------------------
% ----------------------------------------------------------
%

\end{document}